\newtheorem{proposition}{Proposition}
\newtheorem{example}[proposition]{Example}
\newtheorem{lemma}[proposition]{Lemma}
\newtheorem{corollary}[proposition]{Corollary}
\newtheorem{theorem}[proposition]{Theorem}
\newcommand{\II}{\mathds{I}}
\newcommand{\DD}{\mathcal{D}}
\begin{document}

\title[Imprecise copulas]{Final solution to the problem of relating a true copula to an imprecise copula}%
\author{Matja\v{z} Omladi\v{c}}%
\address{Institute of Mathematics, Physics and Mechanics, Ljubljana, Slovenia}%
\email{Matjaz@Omladic.net}%
\author{Nik Stopar}%
\address{Faculty of electrical engineering, University of Ljubljana, and Institute of Mathematics, Physics and Mechanics, Ljubljana, Slovenia}%
\email{Nik.Stopar@fe.uni-lj.si}%

\thanks{The authors acknowledge financial support from the Slovenian Research Agency (research core funding No. P1-0222).}
\subjclass{AMS}%
\keywords{Copula;Quasi-copula;Discrete copula;Imprecise copula; Defect;}%

\begin{abstract}
  In this paper we solve in the negative the problem proposed in this journal (I.\ Montes et al., \textsl{Sklar's theorem in an imprecise setting}, Fuzzy Sets and Systems, \textbf{278} (2015), 48--66) whether an order interval defined by an imprecise copula contains a copula. Namely, if $\mathcal{C}$ is a nonempty set of copulas, then $\underline{C} = \inf\{C\}_{C\in\mathcal{C}}$ and $\overline{C}= \sup\{C\}_{C\in\mathcal{C}}$ are quasi-copulas and the pair $(\underline{C},\overline{C})$ is an imprecise copula according to the definition introduced in the cited paper, following the ideas of $p$-boxes. We show that there is an imprecise copula $(A,B)$ in this sense such that there is no copula $C$ whatsoever satisfying $A \leqslant C\leqslant B$. So, it is questionable whether the proposed definition of the imprecise copula is in accordance with the intentions of the initiators. Our methods may be of independent interest: We upgrade the ideas of Dibala et al. (\textsl{Defects and transformations of quasi-copulas}, Kybernetika, \textbf{52} (2016), 848--865) where possibly negative volumes of quasi-copulas as defects from being copulas were studied.
\end{abstract}
\maketitle

\section{Introduction}\label{sec:intro}

Dependence concepts play a crucial role in multivariate statistical literature since it was recognized that the independence assumption cannot describe conveniently the behavior of a stochastic system. One of the main tools in modeling these concepts have eventually become copulas due to their theoretical omnipotence emerging from \cite{Skla} (see also the monographs \cite{AlFrSc,DuSe,Nels}). Namely, they are used to represent and construct joint distribution functions of random vectors in terms of the related one-dimensional marginal distribution functions. As a more general concept, quasi-copulas were introduced in \cite{AlNeSc} and an equivalent definition was given later in \cite{GeQuMoRoLaSe}. Quasi-copulas have interesting applications in several areas, such as fuzzy logic \cite{HaMe,SaTuMe}, fuzzy preference modeling \cite{DeScDeMeDeBa,DiMoDeBa} or similarity measures \cite{DeBaJaDeMe}. Other deep results concerning quasi-copulas can be found in \cite{DeBa,JaDeBaDeMe,NeQuMoRoLaUbFl}.

While copulas are characterized (in the bivariate case) by the nonnegativity of the volume of each subrectangle of the unit square $\II^2$ (where $\II=[0,1]$) which is a Cartesian product of two subintervals of $\II$, this is no longer true for quasi-copulas. This defect of quasi-copulas can be described in several ways, indicating how far away they are from copulas. The authors of \cite{DiSaPlMeKl} introduce several such descriptions and apply them to transform the original quasi-copulas. Note that the sequence of iterative transformations always converges to a copula. This allows them to introduce an equivalence relation on the set of quasi-copulas by grouping quasi-copulas converging to the same copula into an equivalence class. They also give an interesting application of their approach to the so-called imprecise copulas \cite{MoMiMo,MoMiPeVi,PeViMoMi1,PeViMoMi2,Wall}. However, they do not answer one of the main questions proposed there.

Let us recall Definition 1.7.1 in \cite{DuSe}: for quasi-copulas $A$ and $B$ we write $A\leqslant B$ in the \emph{pointwise order} whenever $A(\mathbf{u})\leqslant B(\mathbf{u})$ for all $\mathbf{u}\in\II^2$ (in \cite[Definition 2.8.1]{Nels} this order is denoted by $A\prec B$ and called the \emph{concordance ordering}). If $\mathcal{C}$ is any nonempty set of quasi-copulas, then $\underline{C}= \inf\{C\}_{C\in\mathcal{C}}$ and $\overline{C}= \sup\{C\}_{C\in\mathcal{C}}$ are quasi-copulas by \cite[Theorem 6.2.5]{Nels}. In this respect the set of quasi-copulas is a complete lattice and may be actually seen as an order completion of the set of all copulas. Following the ideas of $p$-boxes one may consider in the case that $\mathcal{C}$ contains copulas only, the pair $(\underline{C},\overline{C})$ as an ``imprecise copula'' representing the set of copulas $\mathcal{C}$. While the authors of \cite{MoMiPeVi,PeViMoMi1,PeViMoMi2} introduce their definition of an imprecise copula so that this is true, they propose a question in the other direction whether every imprecise copula can be obtained in this way. The main purpose of this paper is to answer this 6 years old question in the negative. Actually, we prove much more: There is an imprecise copula $(A,B)$ in their sense such that there is no copula $C$ whatsoever such that $A \leqslant C\leqslant B$ (i.e.\ the order interval generated by $(A,B)$ contains $C$). In order to do that we need to use and extend substantially the methods of \cite{DiSaPlMeKl}.

The comment immediately following Definition 6 in \cite{PeViMoMi1} says: ``We are using the terminology imprecise copula in the definition above because we intend it as a mathematical model for the imprecise knowledge of a copula.'' This might be questionable now that we know that the order interval of quasi-copulas defined by an imprecise copula may contain no copula at all.

The paper is organized as follows. Section 2 gives a novel approach to discrete copulas and quasi-copulas, an important tool we introduce to study the proposed problem. In Section 3 we present and slightly extend the methods of \cite{DiSaPlMeKl}, and give an example in the (discrete) quasi-copula setting indicating that the desired counterexample cannot be made using these methods only. (We believe that analogous example can be done in a general setting but omit the lengthy calculations and rather give later a more powerful example in details.) The methods of this section are substantially upgraded in Section 4, where we present (among other things) a necessary and sufficient condition for an imprecise copula $(A,B)$ that the order interval it generates contains a copula. The desired counterexample is then given in Section 5 together with additional conditions that quasi-copula $A$ equals the infimum of copulas contained in this interval, respectively that quasi-copula $B$ equals the supremum of copulas contained in this interval.

\section{Discrete copulas vs.\ continuous copulas}\label{sec:discrete}

In this section we present discrete copulas in somewhat more general fashion than in \cite{QuMoSe}. These copulas together with their interplay with the usual copulas will play a crucial role in developing our main results later.

Actually, we need to start even in a bit more general way and consider real functions $A$ defined either on the unit square $\mathds{I}^2$, where $\mathds{I}=[0,1]$, or on a mesh within this square $\Delta=\delta_x\times \delta_y$ determined by some points
\[
    \delta_x=\{0=x_0<x_1<\cdots<x_p=1\}\quad\mbox{and}\quad \delta_y=\{0=y_0<y_1<\cdots<y_q=1\}.
\]
Consider the rectangles whose corners are the intersections of verticals going through two consecutive $x_i$'s with horizontals going through two consecutive $y_j$'s. We will say that the mesh $\Delta$ is \emph{determined by these rectangles}. So, we will either have $A:\II^2\rightarrow \mathds{R}$ or $A:\Delta \rightarrow \mathds{R}$ and call the first case the \textsl{general case} and the second one the \textsl{discrete case}. For the function $A$ we will often assume  (1) that it is 1-increasing, i.e.\ that it is nondecreasing in each of the two variables, (2) that it is continuous in the general case. Sometimes we will want to unify the two cases and write $\DD$ to either mean $\Delta$ or $\II^2$.

Let us recall the definition of the bilinear interpolation on a rectangle and give the proof of its uniqueness for the sake of completeness. Let $R\subseteq\mathds{I}^2$ be a positively oriented rectangle defined by corners $\mathbf{a}, \mathbf{b}, \mathbf{c},$ and $\mathbf{d}$, with $\mathbf{a}$ as the southwest corner. This will be assumed as a standard notation of corners of a rectangle unless specified otherwise. Assume we know the values of a function $A$ at the corners of $R$.

\begin{proposition}\label{bilinear}
  Given the values of a 1-increasing function $A$ at the corners of a rectangle $R$ there exists a unique function $A$ on $R$ such that
\begin{description}
  \item[(a)] its values coincide with the starting values at the corners;
  \item[(b)] each one-dimensional section parallel to the axes is linear.
\end{description}
\end{proposition}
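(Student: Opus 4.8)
The plan is to prove existence and uniqueness by writing down an explicit formula for the bilinear interpolant and verifying the two conditions. Place the rectangle $R$ with southwest corner $\mathbf{a}=(x_0,y_0)$ and northeast corner $\mathbf{c}=(x_1,y_1)$, so that $\mathbf{b}=(x_1,y_0)$ and $\mathbf{d}=(x_0,y_1)$, and let the prescribed corner values be $A(\mathbf{a}),A(\mathbf{b}),A(\mathbf{c}),A(\mathbf{d})$. Introduce the normalized coordinates $s=(x-x_0)/(x_1-x_0)$ and $t=(y-y_0)/(y_1-y_0)$, both ranging over $\II$ on $R$, and define
\begin{equation*}
  A(x,y)=(1-s)(1-t)A(\mathbf{a})+s(1-t)A(\mathbf{b})+st\,A(\mathbf{c})+(1-s)t\,A(\mathbf{d}).
\end{equation*}
First I would check condition (a): substituting the four corner points gives $(s,t)\in\{(0,0),(1,0),(1,1),(0,1)\}$, and in each case exactly one of the four products of the form $(1-s)(1-t)$ etc.\ equals $1$ while the others vanish, so the formula reproduces the prescribed values. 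For condition (b), fixing $y$ (hence $t$) makes the expression an affine function of $s$, hence of $x$, and symmetrically fixing $x$ makes it affine in $y$; this is exactly separate linearity in each variable.

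Next I would establish uniqueness. Any function satisfying (b) must, along each horizontal line $y=\text{const}$, be the affine function of $x$ determined by its two endpoint values on the vertical edges of $R$; those endpoint values are in turn forced, along each vertical edge, to be the affine interpolant of the two corner values on that edge. Tracing this through, the value at an interior point is completely determined by the four corner values, and the resulting expression coincides with the displayed formula. Thus the interpolant is unique. I would phrase this as: the restriction to the boundary edges is forced by affineness, and then the interior is forced by horizontal affineness, so no freedom remains.

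Finally I would address the role of the $1$-increasing hypothesis. Note that the existence and uniqueness of the bilinear interpolant do not actually require $A$ to be $1$-increasing; the formula works for arbitrary corner values. The hypothesis is relevant only if one wishes the interpolated function to inherit monotonicity, so I would remark that when the corner values come from a $1$-increasing function the differences $A(\mathbf{b})-A(\mathbf{a})$, $A(\mathbf{c})-A(\mathbf{d})$ (and the analogous vertical differences) are nonnegative, from which the partial derivatives $\partial_x A=\frac{1}{x_1-x_0}\big[(1-t)(A(\mathbf{b})-A(\mathbf{a}))+t(A(\mathbf{c})-A(\mathbf{d}))\big]\geqslant 0$ and its $y$-counterpart are nonnegative, so the interpolant is again $1$-increasing. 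The only mild obstacle here is bookkeeping: being careful that ``$1$-increasing'' for the corner data means monotonicity along both pairs of parallel edges, so that both mixed differences entering each partial derivative have the correct sign; once that is set up the verification is a one-line computation.
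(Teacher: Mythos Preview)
Your argument is correct and follows essentially the same idea as the paper: write down an explicit bilinear interpolant and verify (a) and (b). The only difference is cosmetic---the paper first normalises the rectangle to $\II^2$ and then, using the $1$-increasing hypothesis to guarantee $A(\mathbf{c})>A(\mathbf{a})$ in the non-constant case, further normalises the corner values to $A(0,0)=0$, $A(1,1)=1$, arriving at the formula $\alpha x+\beta y+(1-\alpha-\beta)xy$; you instead keep the general corner values and write the interpolant directly in convex-combination form, which has the advantage that your proof does not actually need the $1$-increasing assumption (a point you rightly note), and you give a more explicit uniqueness argument than the paper does.
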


\begin{proof}
 If the function is constant, then the proposition is easy. If not, then $A(\mathbf{c})>A(\mathbf{a})$ by the fact that $A$ is 1-increasing. We assume with no loss that $R=\mathds{I}^2$, and that $A(0,0)=0$ (after subtracting a constant from $A$, if necessary) and $A(1,1)=1$ (after multiplying $A$ by a constant, if necessary). Let $A(1,0)= \alpha$ and $A(0,1)=\beta$ to write down the desired solution
\[
    A(x,y)=\alpha x+\beta y +(1-\alpha-\beta)xy.
\]
\end{proof}

The function on $R$ obtained in this way will be called a \textsl{bilinear interpolation} of $A$ through its values at the corners. For any rectangle $R\subseteq\mathds{I}^2$ with standard corners $\mathbf{a}, \mathbf{b}, \mathbf{c},$ and $\mathbf{d}$, and any function $A$ defined at least on these corners we let the volume of $R$ with respect to $A$ be equal to
\[
    V_A(R)= A(\mathbf{a})+A(\mathbf{c})-A(\mathbf{b})-A(\mathbf{d}).
\]

\begin{corollary}\label{positive}
  The bilinear interpolation $A$ of a 1-increasing function defined on the corners of $R$ is 1-increasing. Moreover, for every subrectangle $R_1\subseteq R$ we have:
\begin{description}
  \item[(a)] $V_A(R_1)>0$ if and only if $V_A(R)>0$
  \item[(b)] $V_A(R_1)<0$ if and only if $V_A(R)<0$
  \item[(c)] $V_A(R_1)=0$ if and only if $V_A(R)=0$
\end{description}
\end{corollary}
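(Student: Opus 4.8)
The plan is to work entirely from the explicit formula furnished by Proposition~\ref{bilinear} and to reduce the general statement to the normalized situation used there. First I would dispose of the degenerate case: if the corner values are constant then $A$ is constant, hence trivially $1$-increasing and $V_A(R_1)=V_A(R)=0$ for every $R_1$, so all three equivalences hold vacuously. Assuming we are not in this case, Proposition~\ref{bilinear} lets me assume $R=\II^2$, $A(0,0)=0$, $A(1,1)=1$, $A(1,0)=\alpha$, $A(0,1)=\beta$, and
\[
    A(x,y)=\alpha x+\beta y+(1-\alpha-\beta)xy .
\]
The key point in making this reduction is that the two operations used to normalize---subtracting a constant from $A$ and dividing by the positive constant $A(\mathbf{c})-A(\mathbf{a})$---preserve both the property of being $1$-increasing and the sign of every volume $V_A(\,\cdot\,)$; indeed subtracting a constant leaves $V_A$ unchanged (the constant cancels in $A(\mathbf{a})+A(\mathbf{c})-A(\mathbf{b})-A(\mathbf{d})$) and scaling by a positive factor multiplies all volumes by that factor.

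Next I would record the constraints on $\alpha,\beta$. Since $A$ is $1$-increasing on the corners, comparing $A(0,0)$ with $A(1,0)$ and $A(0,0)$ with $A(0,1)$ gives $\alpha,\beta\geqslant 0$, while comparing $A(1,0)$ and $A(0,1)$ with $A(1,1)=1$ gives $\alpha,\beta\leqslant 1$; thus $\alpha,\beta\in[0,1]$. To prove that the interpolation is $1$-increasing I would simply note that the increment in the $x$ direction is governed by $\partial A/\partial x=\alpha+(1-\alpha-\beta)y$, which is affine in $y$ and therefore attains its minimum over $y\in[0,1]$ at an endpoint; these endpoint values are $\alpha\geqslant 0$ and $1-\beta\geqslant 0$, so $\partial A/\partial x\geqslant 0$ throughout. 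The symmetric computation $\partial A/\partial y=\beta+(1-\alpha-\beta)x\geqslant 0$ handles the other variable, and together they show that $A$ is $1$-increasing.

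For the volume equivalences I would compute $V_A$ directly for an arbitrary nondegenerate subrectangle $R_1=[x_1,x_2]\times[y_1,y_2]\subseteq R$. Substituting the formula for $A$, the linear terms $\alpha x$ and $\beta y$ cancel in the alternating sum and one is left with
\[
    V_A(R_1)=(1-\alpha-\beta)(x_2-x_1)(y_2-y_1).
\]
In particular $V_A(R)=V_A(\II^2)=1-\alpha-\beta$, and since $x_2-x_1>0$ and $y_2-y_1>0$ the factor $(x_2-x_1)(y_2-y_1)$ is strictly positive. Hence $V_A(R_1)$ and $V_A(R)$ always share the same sign, which is exactly the content of the three equivalences (a)--(c). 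I do not expect any genuine obstacle here: the whole argument rests on the single algebraic identity above, and the only points requiring a little care are the bookkeeping in the normalization step (checking that the signs of volumes are preserved) and the tacit assumption that the subrectangles in question are nondegenerate, so that the positive prefactor does not vanish.
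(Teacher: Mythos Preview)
Your proof is correct and follows essentially the same route as the paper's: reduce to the normalized bilinear form on $\II^2$ and read off both monotonicity and the volume sign from the coefficient $1-\alpha-\beta$. The only cosmetic difference is that the paper phrases the volume part via the constant mixed partial $\partial^2 A/\partial x\,\partial y = 1-\alpha-\beta = V_A(R)$, whereas you compute $V_A(R_1)=(1-\alpha-\beta)(x_2-x_1)(y_2-y_1)$ directly; these are of course the same observation.
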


\begin{proof}
  Again we assume with no loss that $R=\mathds{I}^2$ and that the values of $A$ on the corners are as assumed in the proof of Proposition \ref{bilinear}. Recall the formula from that proof to see that the first partial derivatives of $A$ are respectively equal to $\alpha+ (1-\alpha-\beta)y$ (which is no smaller than zero if and only if $A$ is increasing in the direction of $x$) and $\beta+ (1-\alpha-\beta)x$ (which is no smaller than zero if and only if $A$ is increasing in the direction of $y$). The second mixed partial derivative of $A$ equals $1-\alpha-\beta$ which is exactly $V_A(R)$. So, the 2-volume of any rectangle $R_1\subseteq R$ with sides of length $\gamma$ and $\delta$ equals $(1-\alpha-\beta)\gamma\delta$ and the corollary follows.
\end{proof}

From now on all our functions will be 1-increasing. If $A$ is defined on $\mathds{I}^2$ we get a function defined on a mesh $\Delta= \delta_x \times\delta_y$ simply by taking the restriction $A|_\Delta$. On the other hand, if $A$ is defined on the mesh $\Delta$ we can extend it to a function on $\mathds{I}^2$ by taking the bilinear interpolation of $A$ on each of the rectangles determined by the mesh; we will denote this interpolation by $A^{\mathrm{BL}}$. Observe that this definition depends also on the mesh, but we are not pointing that out in this notation when the mesh is understood.

\begin{proposition}
  If $A$ is defined on the mesh $\Delta=\delta_x\times\delta_y$, then $A^{\mathrm{BL}}$ is an absolutely continuous function on $\mathds{I}^2$ and $A^{\mathrm{BL}}|_\Delta=A$.
\end{proposition}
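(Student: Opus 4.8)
The plan is to prove the two assertions in turn, both resting on the piecewise-bilinear structure of $A^{\mathrm{BL}}$: on each mesh rectangle $R_{ij}=[x_{i-1},x_i]\times[y_{j-1},y_j]$ it is, by construction and by Proposition \ref{bilinear}, the unique bilinear polynomial matching $A$ at the four corners. First I would settle that the piecewise definition produces a single-valued continuous function on $\mathds{I}^2$ (which simultaneously yields $A^{\mathrm{BL}}|_\Delta=A$), and then I would obtain absolute continuity by exhibiting an explicit integrable density.

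For well-definedness and continuity I would look at two mesh rectangles abutting along a common edge, say the vertical segment $\{x_i\}\times[y_{j-1},y_j]$. Each carries a bilinear interpolant whose restriction to that edge is affine in $y$ (the $x$-variable being frozen), and both restrictions agree with $A$ at the two endpoints of the edge, which are mesh points. Since an affine function of one variable is determined by its values at two distinct points, the two interpolants coincide along the whole shared edge, and likewise along horizontal edges. Hence $A^{\mathrm{BL}}$ is unambiguously defined and continuous across every mesh line. Evaluating at the corners, which are exactly the points of $\Delta=\delta_x\times\delta_y$, gives $A^{\mathrm{BL}}|_\Delta=A$ at once from Proposition \ref{bilinear}(a).

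For absolute continuity I would produce the density. On the interior of $R_{ij}$ the interpolant is smooth, and the normalized formula $A(x,y)=\alpha x+\beta y+(1-\alpha-\beta)xy$ from the proof of Proposition \ref{bilinear} shows that its mixed second partial derivative is the constant $c_{ij}=V_{A^{\mathrm{BL}}}(R_{ij})/\big((x_i-x_{i-1})(y_j-y_{j-1})\big)$. Define $a$ on $\mathds{I}^2$ (off the mesh lines, a null set) by $a\equiv c_{ij}$ on the interior of $R_{ij}$; as there are finitely many rectangles, $a$ is bounded, so $a\in L^1(\mathds{I}^2)$. It then remains to check $V_{A^{\mathrm{BL}}}(R)=\int_R a\,d\lambda$ for every rectangle $R\subseteq\mathds{I}^2$. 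I would subdivide $R$ along the mesh lines it meets into finitely many sub-rectangles, each contained in a single $R_{ij}$; the corner sum $V_{A^{\mathrm{BL}}}$ is additive under such a subdivision (a purely algebraic telescoping identity, needing no continuity), while on each piece $R_1\subseteq R_{ij}$ a direct computation gives $V_{A^{\mathrm{BL}}}(R_1)=c_{ij}\cdot\mathrm{area}(R_1)=\int_{R_1}a\,d\lambda$. Summing over the pieces yields the integral representation, i.e.\ absolute continuity. Equivalently, one may observe that $A^{\mathrm{BL}}$ is continuous with uniformly bounded partial derivatives off the mesh lines, hence Lipschitz, which gives absolute continuity as well.

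The only step demanding genuine care is the edge-compatibility in the second paragraph: it is what welds a collection of independently prescribed bilinear patches into one continuous function, and it is precisely where the shared corner values are used. Everything afterward is the algebraic additivity of $V_A$ combined with the single-rectangle volume computation already implicit in Proposition \ref{bilinear}.
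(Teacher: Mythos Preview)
Your argument is correct. The paper itself gives no proof of this proposition; it is stated and immediately followed by the next definitions, so there is nothing to compare against. What you have written is a clean filling-in of the omitted details: the edge-compatibility check (affine restrictions agreeing at two points) is exactly what is needed to make the piecewise definition single-valued and continuous, and the piecewise-constant mixed partial serves as the density witnessing absolute continuity of the associated signed measure. Your alternative remark that $A^{\mathrm{BL}}$ is Lipschitz is also valid and perhaps the quickest route if one reads ``absolutely continuous'' in the real-analytic sense.
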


For a function $A$ (whether general or discrete) we will say that
\begin{description}
  \item[(A)] $A$ is \emph{grounded} if $A(x,0)=0$ and $A(0,y)=0$ for all $x,y$ such that $(x,0),(0,y)\in\DD$;
  \item[(B)] number 1 is its \emph{neutral element} if $A(x,1)=x$ and $A(1,y)=y$ for all $x,y$ such that $(x,1),(1,y)\in \DD$;
  \item[(C)] $A$ is \emph{$2$-increasing} if the volume of every rectangle with corners in $\DD$ is nonnegative;
  \item[(D)] $A$ is quasi-$2$-increasing if this condition is fulfilled for all rectangles with corners in $\DD$ that have non-empty intersection with the sides of the unit square $\mathds{I}^2$.
\end{description}

Now, if $\DD=\mathds{I}^2$ and $A$ is grounded, has 1 as a neutral element and is 2-increasing, then $A$ is called a \emph{copula}. It is called a \emph{quasi-copula} if it is grounded, has 1 as a neutral element and is quasi-2-increasing. If $\DD= \Delta$ then in the two respective cases $A$ is called a \emph{discrete copula}, respectively a \emph{discrete quasi-copula}. Observe that our notion of discrete copula is slightly more general than the one introduced in \cite[Section 3.1.1]{DuSe} and our notion of discrete quasi-copula is slightly more general than the one introduced in \cite{QuMoSe}. Recall that in the definition of a quasi-copula condition \textbf{(D)} can be equivalently replaced by (cf.\ \cite[p.\ 236]{Nels})
\begin{description}
  \item[(D')] $A$ is increasing and $1$-Lipschitz in each variable.
\end{description}
We shall not study this approach in details.

\begin{proposition}\label{copula_properties}
  Let $A$ be a function defined on $\Delta$, then
\begin{description}
  \item[(a)] $A$ is grounded if and only if $A^{\mathrm{BL}}$ is grounded;
  \item[(b)] 1 is a neutral element for $A$ if and only if 1 is a neutral element for $A^{\mathrm{BL}}$;
  \item[(c)] $A$ is 2-increasing if and only if $A^{\mathrm{BL}}$ is 2-increasing;
  \item[(d)] $A$ is quasi-2-increasing if and only if $A^{\mathrm{BL}}$ is quasi-2-increasing;
  \item[(e)] $A$ is a discrete copula if and only if $A^{\mathrm{BL}}$ is a copula;
  \item[(f)] $A$ is a discrete quasi-copula if and only if $A^{\mathrm{BL}}$ is a quasi-copula;
\end{description}
\end{proposition}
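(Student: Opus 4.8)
The plan is to prove the two implications of each equivalence separately, observing that in every case the backward implication is essentially trivial. Since $A^{\mathrm{BL}}|_\Delta = A$, groundedness and the neutral-element property of $A^{\mathrm{BL}}$ restrict immediately to $A$; and for any rectangle $R$ with corners in $\Delta$ the volume $V_A(R)$ depends only on the four corner values, so $V_A(R)=V_{A^{\mathrm{BL}}}(R)$. Hence if $A^{\mathrm{BL}}$ is 2-increasing (resp.\ quasi-2-increasing) then every (resp.\ every boundary-touching) mesh rectangle has nonnegative volume, which gives the backward halves of (c) and (d). For the forward halves of (a) and (b) I would argue edge by edge: on the bottom edge $y=0$ of a cell the bilinear interpolation reduces to linear interpolation between the corner values $A(x_i,0),A(x_{i+1},0)$, both $0$ when $A$ is grounded, so $A^{\mathrm{BL}}(\cdot,0)\equiv 0$, and symmetrically on the left edge; the same reduction sends the corner values $A(x_i,1)=x_i$ to the identity $x\mapsto x$ on the top edge when $1$ is neutral for $A$. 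Finally (e) and (f) follow formally by conjoining (a), (b) with (c), respectively (d), since a (quasi-)copula on $\II^2$ is exactly a grounded function with neutral element $1$ that is (quasi-)2-increasing, and $A^{\mathrm{BL}}$ is continuous by the preceding proposition.

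For the forward direction of (c) I would exploit that the volume is additive under subdivision: cutting a rectangle by a vertical or horizontal line splits its volume as a sum, the shared-edge terms cancelling. Given an arbitrary rectangle $R'\subseteq\II^2$, insert every mesh line of $\Delta$ meeting its interior; this partitions $R'$ into finitely many subrectangles, each contained in a single mesh cell, and $V_{A^{\mathrm{BL}}}(R')$ is the sum of their volumes. On a fixed cell $A^{\mathrm{BL}}$ is the bilinear interpolation, so by Corollary \ref{positive} each such subrectangle volume has the same sign as the volume of the whole cell; and the cell, being a mesh rectangle, has nonnegative volume because $A$ is 2-increasing. Thus every summand is nonnegative and so is $V_{A^{\mathrm{BL}}}(R')$.

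The delicate point, and the main obstacle, is the forward direction of (d): now the subdivision argument fails, because a rectangle touching the boundary may cross interior cells whose volumes are unconstrained and possibly negative, so the summands need not all be nonnegative. Here I would instead reduce a boundary-touching rectangle to mesh-aligned ones by a piecewise-affine argument. Say $R'=[s_1,s_2]\times[0,t_2]$ has its bottom edge on $y=0$ (the three remaining edge positions are symmetric). Consider $\phi(t)=V_{A^{\mathrm{BL}}}([s_1,s_2]\times[0,t])$: for $t$ ranging over a single row $[y_j,y_{j+1}]$ the maps $t\mapsto A^{\mathrm{BL}}(s_1,t)$ and $t\mapsto A^{\mathrm{BL}}(s_2,t)$ are affine, so $\phi$ is affine on each row and hence continuous and piecewise affine on $[0,1]$. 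An affine function on an interval is bounded below by its endpoint values, so $\phi(t_2)\geq\min\{\phi(y_l),\phi(y_{l+1})\}$ for the row $[y_l,y_{l+1}]\ni t_2$, and it suffices to show $\phi(y_j)\geq 0$ at the mesh points. Fixing $t_2=y_j$ a mesh line and repeating the same reasoning in the $x$-variable — where $s\mapsto A^{\mathrm{BL}}(s,y_j)$ and $s\mapsto A^{\mathrm{BL}}(s,0)$ are piecewise affine — reduces $\phi(y_j)=V_{A^{\mathrm{BL}}}([s_1,s_2]\times[0,y_j])$ to values $V_{A^{\mathrm{BL}}}([x_i,x_{i'}]\times[0,y_j])$ at mesh points $x_i,x_{i'}$. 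These are volumes of mesh rectangles touching the side $y=0$, hence nonnegative because $A$ is quasi-2-increasing, and the argument closes. The only care needed is the bookkeeping of the four symmetric boundary cases and the verification of the affinity claims, both of which follow from $A^{\mathrm{BL}}$ being linear in each variable separately on every cell.
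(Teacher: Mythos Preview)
Your proof is correct and follows essentially the same approach as the paper: the backward implications are trivial via restriction, (a)--(b) follow from linearity on edges, (c) uses subdivision by mesh lines together with Corollary~\ref{positive}, and (e)--(f) are formal consequences. For (d) the paper packages the argument as a single trilinear observation---the volume $V_{A^{\mathrm{BL}}}(R)$ is linear in each of the three free corner coordinates on the cuboid of mesh intervals containing them, hence lies in the convex hull of its values at the eight corners, which are volumes of boundary-touching mesh rectangles---whereas you unfold this into a one-variable-at-a-time piecewise-affine reduction; the content is the same, and your version is arguably more transparent.
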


\begin{proof}
  \textbf{(a)} and \textbf{(b)} are trivial consequences of the definition. \textbf{(c):} If $A$ is 2-increasing, then, in particular, the rectangles that belong to the mesh all have nonnegative $A^{\mathrm{BL}}$-volume. However, every rectangle is a union of rectangles that are subrectangles of those that belong to the mesh and we are done by Corollary~\ref{positive}. The opposite implication is clear. Now, one implication of part \textbf{(d)} is clear again. For the proof in the other direction we choose an arbitrary rectangle $R\subseteq\II^2$ whose intersection with one of the sides of $\II^2$ is nonempty. We will only treat the case when the intersection with the south side is nonempty since the other four cases go similarly. Denote by $\mathbf{a}$, $\mathbf{b}$, $\mathbf{c}$, and $\mathbf{d}$, the defining corners of $R$, by $x_1$ the first coordinate of $\mathbf{a}$ respectively $\mathbf{d}$, by $x_2$ the first coordinate of $\mathbf{b}$ respectively $\mathbf{c}$, and by $y$ the second coordinate of $\mathbf{c}$ respectively $\mathbf{d}$, while the second coordinate of $\mathbf{a}$ respectively $\mathbf{b}$ is zero. By definition $V_{A^{\mathrm{BL}}}(R)$ is a linear function in each of the variables $x_1,x_2$, $y$, the first two coming from two respective intervals each made of two consecutive members of the mesh $\delta_x$, the third one coming in an analogous way from $\delta_y$. Now, a multilinear function can have at most one zero on any segment parallel to any of the edges. By assumption it is nonnegative at the corners, consequently on any edge, therefore on any face and finally anywhere within this 3-dimensional rectangular cuboid. Parts \textbf{(e)} and \textbf{(f)} follow easily from the previous assertions.
\end{proof}

\textbf{Remark.} Note that the ``easier'' part, i.e.\ the ``if'' part, of all these claims remains true when $A^{\mathrm{BL}}$ is replaced by an arbitrary real valued function, say $\breve{A}$, on $\II^2$ such that $\breve{A}|_\Delta=A$.\\

Observe in passing that our assumption of functions being 1-increasing in order to be able to make bilinear interpolation of the discrete functions defined on the mesh under consideration does not narrow down the applications of our results since most of our functions will be quasi-copulas which are always 1-increasing by \cite[Theorem 7.2.1]{DuSe}.

Following \cite{MoMiPeVi} (cf.\ also \cite{DiSaPlMeKl,OmSk}) we call a pair $(A, B)$ of functions on $\DD$ an \emph{imprecise copula} if \textbf{(A)} they are grounded, \textbf{(B)} each of them has 1 as a neutral element, and
\begin{description}
  \item[(IC1)]  $A(\mathbf{a})+B(\mathbf{c})-A(\mathbf{b})- A(\mathbf{d})\geqslant0$;
  \item[(IC2)] $B(\mathbf{a})+A(\mathbf{c})-A(\mathbf{b})- A(\mathbf{d})\geqslant0$;
  \item[(IC3)] $B(\mathbf{a})+B(\mathbf{c})-B(\mathbf{b})- A(\mathbf{d})\geqslant0$;
  \item[(IC4)] $B(\mathbf{a})+B(\mathbf{c})-A(\mathbf{b})- B(\mathbf{d})\geqslant0$
\end{description}
for each rectangle $R\subseteq\DD$ defined by corners $\mathbf{a}, \mathbf{b}, \mathbf{c},$ and $\mathbf{d}$ in the standard way. In the general case it is known \cite{MoMiPeVi,DiSaPlMeKl,OmSk} and not difficult to verify that, for each imprecise copula $(A,B)$, we have that $A$ and $B$ are quasi-copulas and $A\leqslant B$.

Observe that formally the same verification that yields this result for the general case, gives also the analogous result for the discrete case. So, complying with the above approach to discrete versions of copulas and quasi-copulas we introduce a \textsl{discrete imprecise copula} as a pair of functions $(A,B)$ defined on a mesh $\Delta$ satisfying axioms \textbf{(A)}, \textbf{(B)}, \textbf{(IC1)}, \textbf{(IC2)}, \textbf{(IC3)}, and \textbf{(IC4)}. As the first justification for introducing this notion we give

\begin{proposition}
  If a pair $(A,B)$ defined on a mesh $\Delta$ is a discrete imprecise copula, then the pair $(A^{\mathrm{BL}}, B^{\mathrm{BL}})$ is an imprecise copula. Also, if $(A,B)$ is an imprecise copula, then $(A|_\Delta,B|_\Delta)$ is a discrete imprecise copula.
\end{proposition}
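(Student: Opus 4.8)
The plan is to prove both implications by reducing the imprecise‑copula axioms to statements about rectangles and then transferring information between $\Delta$ and $\II^2$ using the bilinear interpolation machinery already developed. Note first that axioms \textbf{(A)} and \textbf{(B)} are handled immediately: by parts \textbf{(a)} and \textbf{(b)} of Proposition~\ref{copula_properties}, $A$ is grounded (resp.\ has $1$ as neutral element) if and only if $A^{\mathrm{BL}}$ is, and likewise for $B$; restriction to $\Delta$ trivially preserves these pointwise conditions. So the entire content of the proposition lies in transferring the four inequalities \textbf{(IC1)}--\textbf{(IC4)}, each of which is a statement about the signed volumes $V_A, V_B$ of rectangles, but now with the four corner contributions drawn from \emph{two} functions rather than one.

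The forward direction (restriction) is the easy half: if $(A,B)$ is an imprecise copula on $\II^2$, then \textbf{(IC1)}--\textbf{(IC4)} hold for \emph{every} rectangle $R\subseteq\II^2$, in particular for every rectangle with corners in $\Delta$, which is exactly what $(A|_\Delta, B|_\Delta)$ being a discrete imprecise copula requires. This is precisely the ``easier'' direction flagged in the Remark after Proposition~\ref{copula_properties}, and needs no interpolation argument at all.

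The substantive direction is the other one: assuming \textbf{(IC1)}--\textbf{(IC4)} hold for all rectangles with corners in $\Delta$, I must deduce them for an \emph{arbitrary} rectangle $R\subseteq\II^2$ with respect to $A^{\mathrm{BL}}$ and $B^{\mathrm{BL}}$. The natural strategy is to reuse the proof technique of Proposition~\ref{copula_properties}(c)--(d). For \textbf{(IC1)}, say, the expression $A^{\mathrm{BL}}(\mathbf{a})+B^{\mathrm{BL}}(\mathbf{c})-A^{\mathrm{BL}}(\mathbf{b})-A^{\mathrm{BL}}(\mathbf{d})$ is, as we slide the four corners of $R$ across the mesh cells containing them, a function that is linear separately in each of the coordinates $x_1,x_2,y_1,y_2$ that locate the sides of $R$ (this multilinearity is inherited from the bilinear interpolation, since each $A^{\mathrm{BL}}$ or $B^{\mathrm{BL}}$ corner value is bilinear in the two coordinates of that corner). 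A multilinear function attains its extrema at the vertices of the coordinate box, so it suffices to check nonnegativity when all four sides sit on mesh lines --- i.e.\ when $R$ has corners in $\Delta$ --- which is exactly the hypothesis. The only care needed is that the four corners of $R$ need not lie in a single mesh cell nor in a common configuration, so one should either invoke a subdivision/refinement step (adding mesh lines through the sides of $R$ so that $V(R)$ becomes a sum of volumes over sub‑rectangles, as in part \textbf{(c)}) or verify that the multilinear‑extremum argument applies uniformly to each of the four axioms \textbf{(IC1)}--\textbf{(IC4)} simultaneously.

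The main obstacle I anticipate is bookkeeping rather than conceptual: because each axiom mixes $A$ and $B$ at different corners, the clean ``volume splits as a sum of sub‑volumes'' identity used for a single function in Proposition~\ref{copula_properties}(c) is not directly available for, say, \textbf{(IC3)} or \textbf{(IC4)}, where three of the four corners carry $B$ and one carries $A$. One must check that the relevant mixed expression is still multilinear in the side‑coordinates and that its value at each vertex of the coordinate box corresponds to a genuine mesh rectangle for which the discrete axiom applies --- in particular that degenerate limiting rectangles (where two sides collide) yield nonnegative, not merely signed, boundary values. I expect this to go through exactly as in the single‑function case, the key point being that the multilinear‑hence‑vertex‑extremal reasoning is insensitive to whether the four corner weights come from one function or from the pair $(A,B)$; the proof therefore closes by applying this observation to each of \textbf{(IC1)}--\textbf{(IC4)} in turn.
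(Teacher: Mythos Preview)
Your proposal is correct and follows essentially the same route as the paper. The paper executes the key step by moving one side of the rectangle at a time---observing that the left-hand side of each \textbf{(IC$i$)} is linear in the single parameter, nonnegative at both mesh endpoints, hence nonnegative in between---while you invoke the equivalent multilinear vertex-extremum principle in all four side-coordinates at once; the underlying idea (inherited from the proof of Proposition~\ref{copula_properties}(c)--(d)) is the same, and your remarks about degenerate limiting rectangles and the failure of a naive ``sum of sub-volumes'' decomposition for the mixed expressions show you have correctly identified the only places where care is needed.
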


\begin{proof}
  Assume $(A,B)$ is a discrete imprecise copula on a mesh, let $R$ be any rectangle in $\II^2$ with corners denoted standardly by $\mathbf{a}, \mathbf{b}, \mathbf{c},$ and $\mathbf{d}$, and let $C(R)$ be the lefthand side of any of the conditions \textbf{(IC\emph{i})} for $i\in\{1,2,3, 4\}$ in which $A$ is replaced by $A^{\mathrm{BL}}$ and $B$ is replaced by $B^{\mathrm{BL}}$. By assumption $C(R)\geqslant0$ if the four corners are determined by the mesh. Now, assume $\mathbf{b}'$ is another corner of the mesh, horizontally adjacent to $\mathbf{b}$, and $\mathbf{c}'$ is horizontally adjacent to $\mathbf{c}$. Define $\mathbf{b}_t=t\mathbf{b}'+ (1-t)\mathbf{b}$, and $\mathbf{c}_t= \mathbf{c}'+(1-t) \mathbf{c}$, and let $R_t$ be the rectangle with corners $\mathbf{a}, \mathbf{b}_t, \mathbf{c}_t,$ and $\mathbf{d}$ for $t\in\II$. When $t$ moves from 0 to 1, $R_t$ changes from $R$ to the rectangle obtained from $R$ by shifting the east side one grid further. Since $C(R_t)$ is nonnegative at the endpoints $t=0$ and $t=1$ and since it is a linear function of $t$ between the two points, it has to be nonnegative throughout since a linear function cannot have more than one zero on a segment. Using a similar argument one can conclude that the point $\mathbf{c}_t$ can be moved vertically northwards (and $\mathbf{d}$ simultaneously in the same way) without changing the sign of $C(R_t)$. Once we establish that  $\mathbf{c}$ can be assumed arbitrary, we do the same for $\mathbf{a}$. The other implication is clear.
\end{proof}

\section{Bounds of an imprecise copula determined by defects}\label{sec:bounds}

It is our aim in this section to apply the theory presented in Dibala et al.\ \cite{DiSaPlMeKl} on some examples of imprecise copulas. In the next section we will develop this theory further and it will become an important ingredient on the way to our main results. In \cite{DiSaPlMeKl} the authors introduce defects of quasi-copulas and give an extensive study of this notion which helps understanding how far a quasi-copula is from a copula. We start this section by a brief summary of symbols, definitions and some key results of that paper to be needed in the sequel. This way we want to assist an interested reader in following our considerations.

The authors of \cite{DiSaPlMeKl} first introduce four mappings associating each point of the unit square $\II^2$ to a set of rectangles contained in $\II^2$. Let $\mathbf{R}$ mean the set of all rectangles $R\subseteq \II^2$ and denote for any rectangle $R$ its standard corners by $\mathbf{a}_R, \mathbf{b}_R, \mathbf{c}_R,$ and $\mathbf{d}_R$. Next, for any $\mathbf{x}\in\II^2$ we let
\begin{equation}\label{sets_rect}
\begin{split}
   \mathcal{R}_\nearrow(\mathbf{x})=\{R\in\mathbf{R};\,\mbox{s.t.}\ \mathbf{x} =\mathbf{a}_R\},\ \ & \mathcal{R}_\swarrow(\mathbf{x}) =\{R\in\mathbf{R};\,\mbox{s.t.}\ \mathbf{x}= \mathbf{c}_R\}, \\
   \mathcal{R}_\searrow(\mathbf{x}) =\{R\in\mathbf{R};\,\mbox{s.t.}\ \mathbf{x}= \mathbf{d}_R\},\ \  & \mathcal{R}_\nwarrow(\mathbf{x}) =\{R\in\mathbf{R};\,\mbox{s.t.}\ \mathbf{x}=\mathbf{b}_R\}.
\end{split}
\end{equation}
Furthermore, for any function $C:\DD \rightarrow \mathds{R}$, where $\DD= \mathds{I}^2$, the full unit square, they introduce six measures that show how far a function (if quasi-copula) is from a copula
\[
    \begin{split}
   D_\nearrow^C(\mathbf{x})=\inf_{R\in\mathcal{R}_\nearrow(\mathbf{x})}V_C(R),\ \ & D_\swarrow^C(\mathbf{x})=\inf_{R\in\mathcal{R}_\swarrow(\mathbf{x})}V_C(R), \\
   D_\searrow^C(\mathbf{x})=\inf_{R\in\mathcal{R}_\searrow(\mathbf{x})}V_C(R),\ \   & D_\nwarrow^C(\mathbf{x})=\inf_{R\in\mathcal{R}_\nwarrow(\mathbf{x})}V_C(R);
    \end{split}
\]
as well as
\[
    D_M^C=D_\nearrow^C\land D_\swarrow^C\ \ \mbox{and}\ \ D_O^C=D_\searrow^C\land D_\nwarrow^C.
\]
These measures are called suggestively in the above order the \emph{northeast, the southwest, the southeast, the northwest, the main, and the opposite defect}. It is easy to see that in these definitions and in considerations to follow one can replace $\DD=\mathds{I}^2$, the full unit square, by $\DD=\Delta=\delta_x\times \delta_y$, a fixed finite mesh within it. It turns out that (\cite[Proposition 3.2]{DiSaPlMeKl}) \emph{in case that $C$ is a quasi-copula, then it is a copula if and only if one (and, subsequently each) of the defect functions $D_\nearrow^C, D_\swarrow^C, D_\searrow^C,$ and $D_\nwarrow^C$ is identically zero.}

For a quasi-copula $C$ they also introduce
\begin{alignat*}{3}
C_\nearrow  & = C-D_\nearrow^C  &\qquad  C_\searrow  & = C+D_\searrow^C  &\qquad  C_M  & = C-D_M^C\\ C_\swarrow  & = C-D_\swarrow^C  &  C_\nwarrow  & = C+D_\nwarrow^C &  C_O  & = C+D_O^C
\end{alignat*}
They show, among other things, that (\cite[Theorem 4.3]{DiSaPlMeKl}) \emph{in case that $C$ is a quasi-copula each of the six functions $C_\nearrow, C_\swarrow, C_\searrow, C_\nwarrow, C_M$, and $C_O$ is a quasi-copula.} For a quasi-copula $C$ they observe immediately after that theorem that
\[
    C_O\leqslant C\leqslant C_M.
\]

So, to any imprecise copula $(A,B)$ one can introduce quasi-copulas $A_M$ and $B_O$ such that $A\leqslant A_M$ and $B_O\leqslant B$. Actually in \cite[Theorem 5.2]{DiSaPlMeKl} they prove that \emph{a pair $(A,B)$ of quasi-copulas is an imprecise copula if and only if $B\geqslant A_M$ and $B_O\geqslant A$.} Their theory extends easily to the discrete case introduced in Section \ref{sec:discrete}. We will expand their tools further and prove somewhat more. However, let us start with an auxiliary result.

For any function $C:\DD \rightarrow \mathds{R}$, where either $\DD=\mathds{I}^2$, the full unit square, or $\DD=\Delta=\delta_x\times \delta_y$, a fixed finite mesh within it, we can reflect one of the variables by sending either $x\mapsto 1-x$ or $y\mapsto 1-y$. It is easy to compute the resulting function, i.e.\  adjust this operation so that the function stays grounded and has 1 as a neutral element also after the reflection (cf.\ \cite[Theorem 2.4.4]{Nels} and also \cite[Section 1.7.3]{DuSe}). If $C$ is a quasi-copula one can perform the same adjustment; in both cases we denote the result by $C^\sigma$. We do the same in the discrete case, where we need to adjust the mesh according to the transformation, if necessary. It turns out that in all the four cases we have formally either
\[
    C^\sigma(x,y)=y-C(1-x,y),\ \ \mbox{or}\ \ C^\sigma(x,y)=x-C(x,1-y).
\]

\begin{lemma}\label{reflection} Let $C$ be an arbitrary quasi-copula, then every reflection $\sigma$ exchanges the main and the opposite role of every corner of any rectangle and consequently:
  \begin{description}
    \item[(a)] $(C^\sigma)_M=(C_O)^\sigma$
    \item[(b)] $(C^\sigma)_O=(C_M)^\sigma$
  \end{description}
\end{lemma}

\begin{proof}
  Transformation $\sigma: x\mapsto 1-x$ sends a rectangle $R$ with standard corners $\mathbf{a}_R, \mathbf{b}_R, \mathbf{c}_R,$ and $\mathbf{d}_R$ into the rectangle $\sigma(R)$ with standard corners $\mathbf{a}_{\sigma(R)}= \sigma(\mathbf{b}_R), \mathbf{b}_{\sigma(R)}= \sigma(\mathbf{a}_R), \mathbf{c}_{\sigma(R)}=\sigma(\mathbf{d}_R)$, and $\mathbf{d}_{\sigma(R)}=\sigma( \mathbf{c}_R)$. So, when a point $\mathbf{x}$ is sent to $\sigma(\mathbf{x})$, the corresponding sets \eqref{sets_rect} of rectangles exchange their role. In particular, the rectangles of $\mathcal{R}_\nearrow$ and $\mathcal{R}_\nwarrow$ exchange and similarly the rectangles of $\mathcal{R}_\searrow$ and $\mathcal{R}_\swarrow$ do. We compute the defects before and after the transformation and compare the results. After going through the above definition of defects we come up with the fact that $D_M^C$ and $D_O^C$ are exchanged. For the other reflection $\sigma: y\mapsto 1-y$ the particularities of the proof are analogous, but the conclusion is the same.
\end{proof}

\begin{theorem}\label{interval}
  For arbitrary quasi-copulas $A$ and $B$ the pairs $(A,A_M)$ and $(B_O,B)$ are imprecise copulas.
\end{theorem}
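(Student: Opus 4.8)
The plan is to dispose of the structural axioms first and then reduce the entire statement to the single pair $(A,A_M)$. Since $A,A_M,B,B_O$ are all quasi-copulas --- this being exactly the content of the cited \cite[Theorem 4.3, Theorem 5.2]{DiSaPlMeKl}, which also supply $A\leqslant A_M$ and $B_O\leqslant B$ --- each of them is grounded and has $1$ as a neutral element. Hence axioms \textbf{(A)} and \textbf{(B)} hold for both pairs automatically, and only \textbf{(IC1)}--\textbf{(IC4)} need to be checked.

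Next I would isolate a reflection principle: if $(P,Q)$ is an imprecise copula, then so is $(Q^\sigma,P^\sigma)$. Substituting $C^\sigma(x,y)=y-C(1-x,y)$, a reflection relabels the corners of a rectangle, interchanging the southwest corner with the southeast one and the northwest with the northeast (hence the two positive corners with the two negative ones), and a short corner-bookkeeping computation turns condition \textbf{(IC1)} for $(Q^\sigma,P^\sigma)$ on $R$ into \textbf{(IC3)} for $(P,Q)$ on the reflected rectangle, and likewise \textbf{(IC2)} into \textbf{(IC4)}; as $\sigma$ preserves groundedness and the neutral element, and is an involution, the four conditions of $(Q^\sigma,P^\sigma)$ are precisely the four of $(P,Q)$ reshuffled, giving the principle. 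Granting that $(A,A_M)$ is an imprecise copula for every quasi-copula $A$, the pair $(B_O,B)$ then comes for free: applying this to the quasi-copula $B^\sigma$ shows $(B^\sigma,(B^\sigma)_M)$ is an imprecise copula, Lemma~\ref{reflectingMO}(a) rewrites $(B^\sigma)_M=(B_O)^\sigma$, and reflecting the pair $(B^\sigma,(B_O)^\sigma)$ by the principle above returns the imprecise copula $(B_O,B)$ since $\sigma\circ\sigma=\mathrm{id}$.

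It remains to verify \textbf{(IC1)}--\textbf{(IC4)} for $(A,A_M)$, and here I would lean on the defect property of the transform $A_M$ of \cite{DiSaPlMeKl}, which raises $A$ to a quasi-copula by repairing its negative volumes. For a rectangle $R$ with corners $\mathbf{a},\mathbf{b},\mathbf{c},\mathbf{d}$ the relevant fact is that $A_M$ exceeds $A$ at each corner by at least the local defect $\max\{0,-V_A(R)\}$, both when that corner is viewed as the northeast vertex $\mathbf{c}$ and when it is viewed as the southwest vertex $\mathbf{a}$. The northeast completion yields $A_M(\mathbf{c})\geqslant A(\mathbf{b})+A(\mathbf{d})-A(\mathbf{a})$, which is exactly \textbf{(IC1)}, and the symmetric southwest completion gives \textbf{(IC2)}. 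Since the construction commutes with interchanging the two coordinates, \textbf{(IC4)} is the transpose of \textbf{(IC3)}, so only three inequalities are genuinely independent.

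The hard part will be \textbf{(IC3)} (equivalently \textbf{(IC4)}). Rewriting it as $A_M(\mathbf{a})+A_M(\mathbf{c})-A_M(\mathbf{b})-A(\mathbf{d})=V_{A_M}(R)+\bigl(A_M(\mathbf{d})-A(\mathbf{d})\bigr)$, one sees it is automatic when $V_{A_M}(R)\geqslant0$ but, for interior rectangles where $A_M$ may itself carry a negative volume, it demands that the amount $A_M(\mathbf{d})-A(\mathbf{d})$ by which the transform was raised at the northwest corner dominate $-V_{A_M}(R)$. This cannot be obtained from $A\leqslant A_M$ and monotonicity alone; it requires the finer structure of the construction, namely that the corner-by-corner defect corrections are mutually consistent --- the very property that makes $A_M$ a quasi-copula. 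To make this rigorous I would pass to the discrete picture of Section~\ref{sec:discrete} via Proposition~\ref{copula_properties}, so that all the completions become finite suprema attained at mesh points, and then argue by induction over nested rectangles, tracking how the accumulated rise $A_M-A$ compares with the negative $A_M$-volumes of the cells meeting at $\mathbf{d}$.
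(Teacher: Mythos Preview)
Your reduction via reflection and your verification of \textbf{(IC1)} and \textbf{(IC2)} are fine; these do follow directly from the definition of $A_M=A-D_M^A$ and the fact that $-D_M^A(\mathbf{c})\geqslant -D_\swarrow^A(\mathbf{c})\geqslant -V_A(R)$ whenever $\mathbf{c}$ is the northeast corner of $R$ (and dually for $\mathbf{a}$). Your rewriting of \textbf{(IC3)} as $V_{A_M}(R)+\bigl(A_M(\mathbf{d})-A(\mathbf{d})\bigr)\geqslant 0$ is also correct, and together with \textbf{(IC4)} it is equivalent to the single inequality $D_M^A\leqslant D_O^{A_M}$, i.e.\ $(A_M)_O\geqslant A$.

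The gap is that you do not prove this. ``Pass to the discrete picture and induct over nested rectangles'' is a hope, not an argument: you have not said what the inductive hypothesis is, why it is preserved, or how the defect at $\mathbf{d}$ interacts with the $A_M$-volume of a rectangle whose \emph{off-diagonal} corner is $\mathbf{d}$. The difficulty is genuine --- one has to compare a defect of $A$ at a point with a defect of the already-transformed object $A_M$ at the same point, and nothing in your outline bridges the two.

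The paper proceeds quite differently. It works on the dual pair $(B_O,B)$ and invokes \cite[Theorem~5.2]{DiSaPlMeKl}, which collapses all four conditions \textbf{(IC1)}--\textbf{(IC4)} into the single requirement $(B_O)_M\leqslant B$. Writing $Q=B_O$, this amounts to $Q_\nearrow\leqslant B$ and $Q_\swarrow\leqslant B$, which in turn reduces to the defect comparison $D_O^B(\mathbf{x})\leqslant D_\nearrow^{B_O}(\mathbf{x})$. That inequality is then established by a concrete geometric argument: one fixes a rectangle $R\in\mathcal{R}_\nearrow(\mathbf{x})$, takes the rectangle $P$ realising $D_O^B$ at the northeast corner $\mathbf{c}$ of $R$, and splits either $R$ or $P$ depending on the relative position of the two rectangles, so that the pieces recombine into rectangles witnessing $D_O^B(\mathbf{d})$ and $D_O^B(\mathbf{b})$. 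This rectangle-splitting idea is the missing ingredient in your plan; without it (or an equivalent) the proof of \textbf{(IC3)}/\textbf{(IC4)} is incomplete.
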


\begin{proof}
The proof of the desired fact for the pair $(A,A_M)$ follows from the proof of the analogous fact for the pair $(B_O,B)$ by Lemma \ref{reflection}. So, it suffices to show it for the latter case. Choose any quasi-copula $B$ and observe that by \cite[Theorem 5.2]{DiSaPlMeKl} the pair $(B_O,B)$ will be an imprecise copula as soon as we prove that $(B_O)_M\leqslant B$. After denoting $Q=B_O$ we use considerations of \cite{DiSaPlMeKl} to get
\[
    Q_M=Q-D_M^Q=Q-D^Q_\nearrow \land D^Q_\swarrow=(Q-D^Q_\nearrow)\lor (Q-D^Q_\swarrow)=Q_\nearrow\lor Q_\swarrow.
\]
So, in order to show that $Q_M\leqslant B$ we need to show that both $Q_\nearrow \leqslant B$ and $Q_\swarrow\leqslant B$. Observe that by the definition of the mapping $Q\mapsto Q_\nearrow$ and of $Q=B_O$ we get
\[
    Q_\nearrow=Q-D_\nearrow^Q=B+D_O^B-D_\nearrow^{B_O}.
\]
In order to show that $Q_\nearrow\leqslant B$, it suffices to prove that
\begin{equation}\label{eq:defects}
  D_O^B(\mathbf{x})\leqslant D_\nearrow^{B_O}(\mathbf{x}), \quad\mbox{for all}\quad \mathbf{x}\in[0,1]^2.
\end{equation}
For a fixed point $\mathbf{x}\in[0,1]^2$ we denote following the notation of \cite[Section 3]{DiSaPlMeKl} by $\mathcal{R}_\nearrow (\mathbf{x})$ the set of rectangles (contained in $\II^2$ of course) with the southwest corner equal to $\textbf{x}$. The right-hand side of Inequality \eqref{eq:defects} equals the infimum of the volumes with respect to quasi-copula $B_O$ of all rectangles $\mathcal{R}_\nearrow(\mathbf{x})$. Now, we choose a rectangle $R$ whose corners are denoted by $\mathbf{a}=\mathbf{x}, \mathbf{b}, \mathbf{c}, \mathbf{d}$ in the standard way. Using the definition of $B_O$ and of the volume we compute
\begin{equation}\label{eq:volume}
   D_O^B(\mathbf{x})-V_{B_O}(R)=D_O^B(\mathbf{x})-V_B(R)- V_{D_O^B} (R) =-V_B(R)-D_O^B(\mathbf{c})+ D_O^B(\mathbf{b})+D_O^B(\mathbf{d}).
\end{equation}
We want to show that the right-hand side of \eqref{eq:volume} is non-positive. By the remark following Definition 3.1 in \cite{DiSaPlMeKl} we know that defect $D_O^B(\mathbf{c})$ is attained at some rectangle $P\in \mathcal{R}_\nwarrow(\mathbf{c})\cup\mathcal{R}_\searrow(\mathbf{c})$ so  that $D_O^B(\mathbf{c})=V_B(P)$. We will consider only the case that $P\in \mathcal{R}_\nwarrow(\mathbf{c})$ since it turns out that the other one goes in a similar way. We denote by $\textbf{y}$ the southwest corner of $P$ and consider two cases. Assume first that $\textbf{y}$ lies on the line segment connecting $\textbf{d}$ and $\textbf{c}$. Divide the rectangle $R$ into two rectangles $R_1$ and $R_2$ so that $\textbf{y}$ is the northeast corner of $R_1$ and therefore the northwest corner of $R_2$. Note that
\[
    V_B(R)+D_O^B(\mathbf{c})=V_B(R)+V_B(P)=V_B(R_1)+V_B(R_2\cup P)\geqslant D_O^B(\mathbf{d})+D_O^B(\mathbf{b})
\]
implying that the right-hand side of \eqref{eq:volume} is non-positive in this case. Now, if $\textbf{y}$ does not lie on the line segment connecting $\textbf{d}$ and $\textbf{c}$, then $\textbf{d}$ lies on the line segment connecting $\textbf{y}$ and $\textbf{c}$, so we can divide the rectangle $P$ into two rectangles $P_1$ and $P_2$ so that $\textbf{d}$ is the southeast corner of $P_1$ and therefore the southwest corner of $P_2$. Consequently, we have in this case
\[
    V_B(R)+D_O^B(\mathbf{c})=V_B(P_1)+V_B(P_2\cup R)\geqslant D_O^B(\mathbf{d})+D_O^B(\mathbf{b})
\]
so that the left-hand side of \eqref{eq:volume} is always non-positive. When taking the infimum over all rectangles $R\in\mathcal{R}_\nearrow (\mathbf{x})$, one gets \eqref{eq:defects} and we are done.
\end{proof}

We will call an imprecise copula $(C,D)$ an imprecise subcopula of the imprecise copula $(A,B)$ if
\[
    A\leqslant C\leqslant D\leqslant B
\]

\begin{corollary}
  For any imprecise copula $(A,B)$ there exist two imprecise subcopulas, one with the same lower bound $(A,A_M)$, one with the same upper bound $(B_O,B)$.
\end{corollary}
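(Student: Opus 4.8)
The plan is to invoke Theorem \ref{interval} for the structural part and then reduce the containment assertion to two pointwise inequalities. Since $(A,B)$ is an imprecise copula, both $A$ and $B$ are quasi-copulas, so Theorem \ref{interval} immediately gives that $(A,A_M)$ and $(B_O,B)$ are imprecise copulas. Recalling that one imprecise copula $(A',B')$ is contained in $(A,B)$ precisely when $A\leqslant A'\leqslant B'\leqslant B$, and using the relations $A\leqslant A_M$ and $B_O\leqslant B$ of \cite{DiSaPlMeKl}, the whole corollary reduces to proving the two inequalities $A_M\leqslant B$ and $A\leqslant B_O$.

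For the first inequality I would write $A_M=A_\nearrow\lor A_\swarrow$ as in the proof of Theorem \ref{interval}, so that $A_M\leqslant B$ is equivalent to the two inequalities $A_\nearrow\leqslant B$ and $A_\swarrow\leqslant B$. Unwinding the definition of the transformation $A\mapsto A_\nearrow$, one has $A_\nearrow(\mathbf{x})=A(\mathbf{x})-D_\nearrow^A(\mathbf{x})=\sup_{R}\bigl(A(\mathbf{b})+A(\mathbf{d})-A(\mathbf{c})\bigr)$, the supremum being taken over all positively oriented rectangles $R\in\mathcal{R}_\nearrow(\mathbf{x})$ with southwest corner $\mathbf{a}=\mathbf{x}$. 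Hence $A_\nearrow(\mathbf{x})\leqslant B(\mathbf{x})$ holds for every $\mathbf{x}$ if and only if $B(\mathbf{a})+A(\mathbf{c})-A(\mathbf{b})-A(\mathbf{d})\geqslant0$ for every such rectangle, which is exactly condition \textbf{(IC2)}. The analogous computation for $A_\swarrow$, where now $\mathbf{x}$ plays the role of the northeast corner $\mathbf{c}$, turns $A_\swarrow\leqslant B$ into condition \textbf{(IC1)}. Both hold by hypothesis, so $A_M\leqslant B$.

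For the second inequality $A\leqslant B_O$ I would exploit the reflection symmetry rather than repeat the computation. First I would check that if $(A,B)$ is an imprecise copula then so is $(B^\sigma,A^\sigma)$: the reflection $\sigma$ preserves groundedness and the neutral element, and a direct corner-by-corner substitution using $C^\sigma(x,y)=y-C(1-x,y)$ shows that it permutes the four conditions \textbf{(IC1)}--\textbf{(IC4)} among themselves (for instance \textbf{(IC2)} for $(B^\sigma,A^\sigma)$ becomes \textbf{(IC4)} for $(A,B)$). Applying the already established inequality $A_M\leqslant B$ to the pair $(B^\sigma,A^\sigma)$ yields $(B^\sigma)_M\leqslant A^\sigma$; by Lemma \ref{reflectingMO}(a) the left-hand side equals $(B_O)^\sigma$, so $(B_O)^\sigma\leqslant A^\sigma$. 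Since $\sigma$ is an order-reversing involution, applying it once more gives $A\leqslant B_O$, completing the proof.

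The step I expect to be the main obstacle is the bookkeeping in this last paragraph: verifying that reflecting a positively oriented rectangle again yields a positively oriented one, tracking how the corners $\mathbf{a},\mathbf{b},\mathbf{c},\mathbf{d}$ are relabeled, and checking that each of \textbf{(IC1)}--\textbf{(IC4)} for $(B^\sigma,A^\sigma)$ reduces to one of the four conditions for $(A,B)$ with $A$ and $B$ carried onto the correct corners. Should this reflection argument prove awkward, the alternative is to mirror the first inequality directly: write $B_O=B_\nwarrow\land B_\searrow$ and check that $A\leqslant B_\nwarrow$ and $A\leqslant B_\searrow$ are precisely \textbf{(IC3)} and \textbf{(IC4)}, which avoids $\sigma$ altogether at the cost of one more explicit defect computation.
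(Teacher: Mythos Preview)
Your proposal is correct and follows essentially the same route as the paper. The paper states the corollary without proof, regarding it as immediate from Theorem \ref{interval} together with the inequalities $A_M\leqslant B$ and $A\leqslant B_O$, which it imports from \cite[Theorems 4.3 and 5.2]{DiSaPlMeKl}; you have simply unpacked those citations by deriving $A_M\leqslant B$ directly from \textbf{(IC1)}--\textbf{(IC2)} and $A\leqslant B_O$ from \textbf{(IC3)}--\textbf{(IC4)} (via reflection or, as in your alternative, directly), which is exactly the content of the cited results.
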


We will now concentrate on the question proposed in \cite{MoMiPeVi,PeViMoMi2} and studied in \cite{DiSaPlMeKl}. The general case of the following fact is shown in these papers, while the discrete case follows easily using analogous considerations.

\begin{lemma}\label{lem:interval}
  Let $(C_i)_{i\in I}$ be a family of copulas, either general or discrete. Then the functions $\underline{C},\overline{C}:\DD \rightarrow\II$ defined by
\[
    \underline{C}=\bigwedge_{i\in I}C_i\ \ \mbox{and}\ \ \overline{C}=\bigvee_{i\in I}C_i
\]
give rise to an imprecise copula $(\underline{C},\overline{C})$.
\end{lemma}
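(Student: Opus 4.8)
The plan is to verify the two grounding/neutral-element axioms and the four inequalities (IC1)--(IC4) directly, exploiting the fact that each $C_i$ is itself a copula and hence satisfies the (stronger) 2-increasing condition. First I would record that the lattice operations preserve the boundary behaviour: since every $C_i$ is grounded with $1$ as a neutral element, we have $\underline{C}(x,0)=\bigwedge_i C_i(x,0)=0$ and $\overline{C}(x,0)=\bigvee_i C_i(x,0)=0$, and likewise on the other edge, while $\underline{C}(x,1)=\bigwedge_i C_i(x,1)=\bigwedge_i x = x$ and similarly $\overline{C}(x,1)=x$ and the analogous identities with the roles of $x$ and $y$ swapped. (In the discrete case the same computation applies verbatim at the mesh points $(x_i,1),(1,y_j)$ lying in $\Delta$.) This dispenses with axioms \textbf{(A)} and \textbf{(B)}.

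The heart of the argument is the four inequalities. Fix a positively oriented rectangle $R$ with corners $\mathbf{a},\mathbf{b},\mathbf{c},\mathbf{d}$ (southwest, southeast, northeast, northwest in the usual labelling) and consider, for instance, (IC1), namely $\underline{C}(\mathbf{a})+\overline{C}(\mathbf{c})-\underline{C}(\mathbf{b})-\underline{C}(\mathbf{d})\geqslant 0$. The key trick is to pick a single index cleverly: choose $i_0\in I$ realizing (or, if the infimum is not attained, approximating to within $\eps$) the value $\underline{C}(\mathbf{a})=\bigwedge_i C_i(\mathbf{a})$. Then estimate
\[
    \underline{C}(\mathbf{a})+\overline{C}(\mathbf{c})-\underline{C}(\mathbf{b})-\underline{C}(\mathbf{d})
    \geqslant C_{i_0}(\mathbf{a})+C_{i_0}(\mathbf{c})-C_{i_0}(\mathbf{b})-C_{i_0}(\mathbf{d})
    = V_{C_{i_0}}(R)\geqslant 0,
\]
where I used $\overline{C}(\mathbf{c})\geqslant C_{i_0}(\mathbf{c})$, $\underline{C}(\mathbf{b})\leqslant C_{i_0}(\mathbf{b})$, $\underline{C}(\mathbf{d})\leqslant C_{i_0}(\mathbf{d})$, and finally that $C_{i_0}$ is 2-increasing. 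For (IC4), $\overline{C}(\mathbf{a})+\overline{C}(\mathbf{c})-\underline{C}(\mathbf{b})-\overline{C}(\mathbf{d})\geqslant 0$, I would instead pick $i_0$ realizing $\overline{C}(\mathbf{d})=\bigvee_i C_i(\mathbf{d})$ and bound $\overline{C}(\mathbf{a})\geqslant C_{i_0}(\mathbf{a})$, $\overline{C}(\mathbf{c})\geqslant C_{i_0}(\mathbf{c})$, $\underline{C}(\mathbf{b})\leqslant C_{i_0}(\mathbf{b})$ to reduce again to $V_{C_{i_0}}(R)\geqslant 0$. Inequalities (IC2) and (IC3) are symmetric variants: for (IC2) choose $i_0$ attaining $\overline{C}(\mathbf{a})$, and for (IC3) choose $i_0$ attaining $\overline{C}(\mathbf{c})$, in each case so that the two copula values appearing with a plus sign dominate $C_{i_0}$ and the two with a minus sign are dominated by $C_{i_0}$, collapsing the expression to a nonnegative copula volume.

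The only genuine subtlety — the step I expect to be the main obstacle — is that the infimum and supremum over the index family $I$ need not be attained by any single $C_i$, so the naive ``choose the minimizing index'' must be replaced by an $\eps$-argument: for each $\eps>0$ select $i_0$ with $C_{i_0}(\mathbf{a})\leqslant \underline{C}(\mathbf{a})+\eps$ (for (IC1)), carry the chain of inequalities through to obtain $\underline{C}(\mathbf{a})+\overline{C}(\mathbf{c})-\underline{C}(\mathbf{b})-\underline{C}(\mathbf{d})\geqslant V_{C_{i_0}}(R)-\eps\geqslant -\eps$, and then let $\eps\to 0$. One must check that the term whose index is being pinned down is the correct one in each of the four cases so that all remaining comparisons go the favourable way; this is a matter of matching the plus/minus pattern of each axiom to the $\inf$/$\sup$ structure of $\underline{C}$ and $\overline{C}$, exactly as laid out above. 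Since $\underline{C}$ and $\overline{C}$ are known to be quasi-copulas by \cite[Theorem 6.2.5]{Nels} (hence automatically grounded with neutral element $1$ once the boundary computation is done), the verification of (IC1)--(IC4) is all that remains, and the discrete case is identical with $R$ ranging only over rectangles with corners in $\Delta$.
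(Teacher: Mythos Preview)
Your overall strategy is sound and is essentially the standard argument the paper defers to by citation: in each of (IC1)--(IC4) exactly one of the four terms has the ``wrong'' combination of sign and $\inf/\sup$, so one selects an index $i_0$ (or an $\eps$-approximating index) at that single problematic corner and then collapses the expression to $V_{C_{i_0}}(R)\geqslant 0$.

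There is, however, a slip in your specific index choices for (IC2) and (IC3). In (IC2), namely $\overline{C}(\mathbf{a})+\underline{C}(\mathbf{c})-\underline{C}(\mathbf{b})-\underline{C}(\mathbf{d})\geqslant 0$, the only term that does \emph{not} automatically compare favourably with $C_{i_0}$ is $\underline{C}(\mathbf{c})$ (an infimum carrying a plus sign), so you must pick $i_0$ approximating $\underline{C}(\mathbf{c})$, not $\overline{C}(\mathbf{a})$; the latter choice leaves $\underline{C}(\mathbf{c})\leqslant C_{i_0}(\mathbf{c})$ pointing the wrong way. Likewise in (IC3), $\overline{C}(\mathbf{a})+\overline{C}(\mathbf{c})-\overline{C}(\mathbf{b})-\underline{C}(\mathbf{d})\geqslant 0$, the problematic term is $\overline{C}(\mathbf{b})$ (a supremum carrying a minus sign), so $i_0$ should approximate $\overline{C}(\mathbf{b})$ rather than $\overline{C}(\mathbf{c})$. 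Your own stated principle---pin down the unique term whose $\inf/\sup$ direction conflicts with its sign---is exactly right; you just matched it to the wrong corner in those two cases. With these corrections the argument goes through verbatim.
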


So, there are imprecise copulas $(A,B)$ containing copulas in the ordered interval from $A$ to $B$, i.e.\
\[
[A,B]=\{C\,;\,A\leqslant C\leqslant B\}.
\]
The question is whether all imprecise copulas can be obtained in the way given in the lemma above. Even more, it is not known whether for every imprecise copula $(A,B)$ there is a copula $C\in[A,B]$. Theorem \ref{interval} suggests a method to get the desired copula $C$ provided that it exists. Start with an arbitrary imprecise copula $(A,B)$, general or discrete. Then (1) Define $B'=A_M$ and observe that $A \leqslant B'\leqslant B$ and that $(A,B')$ is an imprecise copula by Theorem \ref{interval}. (2) Define $A'=B'_O$ and observe that $A \leqslant A'\leqslant B'\leqslant B$ and that $(A',B')$ is an imprecise copula by Theorem \ref{interval}. Using this procedure repeatedly we get a sequence of imprecise copulas $(A^n,B^n)$, where
\begin{equation}\label{eq:recursion}
  A^{n+1}=(B^{n+1})_O\ \ \mbox{and}\ \ B^{n+1}=(A^n)_M.
\end{equation}

\begin{proposition}\label{exammple_general} Let $(A,B)$ be an imprecise copula and define sequences $A^n$ and $B^n$ by $A^0=A$, $B^0=B$, and Equation \eqref{eq:recursion} above. Then we have
  \begin{description}
    \item[(a)] The sequence $A^n$ is increasing, the sequence $B^n$ is decreasing and they are uniformly converging. Denote their respective limits by $\breve{A}$ and $\breve{B}$.
    \item[(b)] Each pair $(A^n,B^n)$ is an imprecise subcopula of $(A^{n-1},B^{n-1})$.
    \item[(c)] The limiting pair $(\breve{A},\breve{B})$ is also an imprecise subcopula of each $(A^n,B^n)$, and has the property  $(\breve{A})_M=\breve{B}$ and $(\breve{B})_O=\breve{A}$.
  \end{description}
\end{proposition}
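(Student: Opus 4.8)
The plan is to establish the three parts in the natural order (a), (b), (c), since each feeds the next. The key structural facts I would lean on are Theorem~\ref{interval} (which guarantees $(A,A_M)$ and $(B_O,B)$ are imprecise copulas for arbitrary quasi-copulas) together with the monotonicity properties of the defect operators $A\mapsto A_M$ and $B\mapsto B_O$ coming from \cite{DiSaPlMeKl}: namely that $A\leqslant A_M$, that $B_O\leqslant B$, and that both operators are order-preserving on quasi-copulas.

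For part \textbf{(a)}, I would argue by induction that the nesting $A^{n-1}\leqslant A^n\leqslant B^n\leqslant B^{n-1}$ propagates through the recursion \eqref{eq:recursion}. Start from $A^0=A\leqslant B=B^0$. Given the pair $(A^n,B^n)$ is an imprecise copula, Theorem~\ref{interval} tells us $B^{n+1}=(A^n)_M$ satisfies $A^n\leqslant B^{n+1}$ and $(A^n,B^{n+1})$ is an imprecise copula; similarly $A^{n+1}=(B^{n+1})_O$ satisfies $A^{n+1}\leqslant B^{n+1}$. To close the induction I must also check $B^{n+1}\leqslant B^n$ and $A^n\leqslant A^{n+1}$, which follow from monotonicity of the operators applied to the inductive inequalities $A^n\leqslant B^n$ etc.\ (for instance $B^{n+1}=(A^n)_M$ and, using $A^n\leqslant A^{n-1}\leqslant B^n$ appropriately, one bounds it above by $B^n$). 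Since $A^n$ is increasing and bounded above (by $B^0$, or even by $1$) and $B^n$ is decreasing and bounded below, both sequences converge pointwise; because each $A^n,B^n$ is a quasi-copula these are equicontinuous/uniformly Lipschitz (the $1$-Lipschitz property of quasi-copulas), so the convergence is in fact uniform, yielding limits $\breve A$ and $\breve B$.

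Part \textbf{(b)} is then almost immediate: the inductive step above already produces, at each stage, an imprecise copula $(A^n,B^n)$, and the nesting chain $A^{n-1}\leqslant A^n\leqslant B^n\leqslant B^{n-1}$ established in (a) says precisely that $(A^n,B^n)$ lies in the ordered interval $[A^{n-1},B^{n-1}]$. For part \textbf{(c)}, uniform limits of quasi-copulas are quasi-copulas (grounded, $1$ a neutral element, and quasi-$2$-increasing all pass to uniform limits, the last because volumes are continuous in the uniform topology), so $\breve A,\breve B$ are quasi-copulas with $\breve A\leqslant\breve B$; the imprecise-copula inequalities (IC1)--(IC4) likewise survive the limit since their left-hand sides are continuous functionals of $(A^n,B^n)$. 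The containment in each $[A^{n-1},B^{n-1}]$ is inherited from (b) by monotonicity of the nested sequences.

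The main obstacle I anticipate is the fixed-point identities $(\breve A)_M=\breve B$ and $(\breve B)_O=\breve A$. The natural attempt is to pass to the limit in the defining relations $B^{n+1}=(A^n)_M$ and $A^{n+1}=(B^{n+1})_O$, which requires continuity of the operators $\;\cdot\;_M$ and $\;\cdot\;_O$ with respect to uniform convergence. Here I would want to show that the defect functions $D_M$ and $D_O$, being defined as infima of volumes over families of rectangles, depend continuously (indeed $1$-Lipschitz-ly) on the underlying quasi-copula in the sup-norm, so that $A^n\to\breve A$ uniformly forces $(A^n)_M\to(\breve A)_M$ uniformly; combined with $(A^n)_M=B^{n+1}\to\breve B$ this gives $(\breve A)_M=\breve B$, and symmetrically $(\breve B)_O=\breve A$. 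Establishing this uniform continuity of the defect operators is the delicate point and is where I would spend the most care, invoking the explicit formulas for $D_\nearrow$, $D_\swarrow$ and the representation $A_M=A_\nearrow\vee A_\swarrow$ used in the proof of Theorem~\ref{interval}.
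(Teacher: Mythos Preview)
Your overall architecture matches the paper's proof closely: monotone bounded sequences converge pointwise, equicontinuity of quasi-copulas upgrades this to uniform convergence, the imprecise-copula axioms pass to the limit, and the fixed-point identities $(\breve A)_M=\breve B$, $(\breve B)_O=\breve A$ follow by showing the defect operators $D_\nearrow,D_\swarrow$ (hence $D_M$) are continuous in the sup-norm. The paper argues exactly this last point, writing $A^n\to\breve A$ uniformly $\Rightarrow D_M^{A^n}\to D_M^{\breve A}$ $\Rightarrow (A^n)_M\to(\breve A)_M$.

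There is, however, a genuine slip in your inductive step for the nesting in part \textbf{(a)}. You propose to obtain $B^{n+1}\leqslant B^n$ and $A^n\leqslant A^{n+1}$ from \emph{order-preservation} of $A\mapsto A_M$ and $B\mapsto B_O$. Even granting that these operators are monotone (which is not what \cite{DiSaPlMeKl} states), the direction is wrong: from the inductive hypothesis $A^{n-1}\leqslant A^n$ monotonicity would yield $B^n=(A^{n-1})_M\leqslant (A^n)_M=B^{n+1}$, the opposite of what you need. Your parenthetical ``$A^n\leqslant A^{n-1}\leqslant B^n$'' confirms the confusion.

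The correct ingredient, used implicitly in the paper (see the discussion just before the proposition and the citation of \cite[Theorems 4.3 and 5.2]{DiSaPlMeKl}), is that for any imprecise copula $(A,B)$ one has $A\leqslant A_M\leqslant B$ and $A\leqslant B_O\leqslant B$. Applying this with the inductive hypothesis that $(A^n,B^n)$ is an imprecise copula gives $B^{n+1}=(A^n)_M\leqslant B^n$ directly; then $(A^n,B^{n+1})$ is an imprecise copula by Theorem~\ref{interval}, so the same fact yields $A^n\leqslant (B^{n+1})_O=A^{n+1}$. With this correction your argument is complete and coincides with the paper's.
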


\begin{proof}
  Since the sequence of the left-hand sides $A^n$ is a point-wise increasing sequence of quasi-copulas and bounded above (by the Fr\'{e}chet Hoeffding upper bound, say), it converges to a quasi-copula to be denoted by $\breve{A}$. Similarly the sequence of the right-hand sides $B^n$ is point-wise decreasing, bounded below (by the Fr\'{e}chet Hoeffding lower bound, say), so its limit exists, will be denoted by $\breve{B}$ and is a quasi-copula. Furthermore, it is not hard to see that $(\breve{A}, \breve{B})$ is an imprecise subcopula of all imprecise copulas in this sequence which are imprecise subcopulas of each other. A standard consideration shows that the sequences $A^n\to \breve{A}$ and $B^n\to \breve{B}$ converge uniformly. There is a general theorem \cite[Theorem 1.7.6]{DuSe} saying that point-wise convergence yields uniform convergence for copulas. Furthermore, everything that is needed in the proof of this theorem is valid for quasi-copulas as well (cf.\ \cite[Chapter 7]{DuSe}). Now, the fact that 
\[
    A^n\to \breve{A}\ \ \mbox{uniformly, implies}\ \ D_{\nearrow}^{A^n}\to D_{\nearrow}^{\breve{A}}\ \ \mbox{and}\ \ D_{\swarrow}^{A^n}\to D_{\swarrow}^{\breve{A}}\ \ \mbox{so that}\ \ D_M^{A^n}\to D_M^{\breve{A}}.
\]
Consequently,
\[
    B^{n+1}=(A^n)_M=A^n-D_M^{A^n}\to \breve{A}-D_M^{\breve{A}}= (\breve{A})_M.
\]
We have thus seen that $\breve{B}=(\breve{A})_M$ and we can prove similarly that $\breve{A}=(\breve{B})_O$.
\end{proof}

Now, if $\breve{A}=\breve{B}$, this must be a true copula, so that we have found the desired copula between $A$ and $B$. On the other hand, if  $\breve{A}\ne\breve{B}$, this means that one cannot find a copula between $A$ and $B$ only by means of the methods described in this section. We believe we can find an example of a general copula with this property using Proposition \ref{exammple_general}. However, we omit the lengthy calculations and present a more valuable example in Section \ref{sec:main}, although we may decide to publish it in a forthcoming paper if the interest for that develops.

Here, let us present an example of a discrete imprecise copula with this property. We fix a mesh $\Delta=\delta_x \times \delta_y$ where $\displaystyle x_k=y_k =\frac{k}{7}$ for $k=0,1,\ldots, 7$, and let the value of a discrete imprecise copula $(\langle A\rangle, \langle B\rangle)$ be given by
\[
    \langle A\rangle\left(\frac{j}{7},\frac{k}{7}\right)=[A]_{j+1,k+1}\ \ \mbox{and}\ \ \langle B\rangle \left(\frac{j}{7},\frac{k}{7}\right)= [B]_{j+1,k+1}\ \ \mbox{for}\ \ j,k=0,1,\ldots,7,
\]
where $[A]$ and $[B]$ are matrices
\begin{equation}\label{OM:data}
  [A]= \frac{1}{7}\left[
\begin{array}{cccccccc}
 0 & 0 & 0 & 0 & 0 & 0 & 0 & 0 \\
 0 & 0 & 0 & 0 & 0 & 0 & 1 & 1 \\
 0 & 0 & 0 & 0 & 0 & 1 & 2 & 2 \\
 0 & 0 & 0 & 0 & 1 & 2 & 2 & 3 \\
 0 & 0 & 0 & 1 & 2 & 2 & 3 & 4 \\
 0 & 0 & 1 & 2 & 2 & 3 & 4 & 5 \\
 0 & 1 & 2 & 2 & 3 & 4 & 5 & 6 \\
 0 & 1 & 2 & 3 & 4 & 5 & 6 & 7 \\
\end{array}
\right], \quad
  [B]=\frac{1}{7}\left[
\begin{array}{cccccccc}
 0 & 0 & 0 & 0 & 0 & 0 & 0 & 0 \\
 0 & 0 & 0 & 0 & 0 & 1 & 1 & 1 \\
 0 & 0 & 0 & 0 & 1 & 2 & 2 & 2 \\
 0 & 0 & 0 & 1 & 2 & 2 & 3 & 3 \\
 0 & 0 & 1 & 2 & 2 & 3 & 4 & 4 \\
 0 & 1 & 2 & 2 & 3 & 4 & 4 & 5 \\
 0 & 1 & 2 & 3 & 4 & 4 & 5 & 6 \\
 0 & 1 & 2 & 3 & 4 & 5 & 6 & 7 \\
\end{array}
\right].
\end{equation}

\begin{example}
  There exists a discrete imprecise copula $(\langle A\rangle, \langle B\rangle)$, actually we may consider the one obtained from \eqref{OM:data}, such that $\langle A\rangle\neq\langle B\rangle$, $\langle A\rangle_M=\langle B\rangle$ and $\langle B\rangle_O= \langle A\rangle$.
\end{example}

\section{Existence of a copula inside an imprecise copula}

It is time to develop the main tools needed on the way to our main results. Let $A$ and $B$ be a pair of real valued functions such that $A\leqslant B$. This assumption will later be narrowed down to quasi-copulas but until then we do not assume even that they are 1-increasing. As before, we assume that they are defined on $\DD$ which is either $\II^2$ in the general case or a mesh $\Delta$ in the discrete case. For a rectangle $R$ with {distinct} standard vertices $\mathbf{a}, \mathbf{b}, \mathbf{c}, \mathbf{d}$ we define the \textsl{main corner set} $M(R)=\{\mathbf{a},\mathbf{c}\}$ and the \textsl{opposite corner set} $O(R)= \{\mathbf{b},\mathbf{d}\}$. Given a rectangle $R$ we define for any point $\mathbf{x}\in\II^2$ its multiplicity by
\[
    m_R(\mathbf{x})=\left\{
           \begin{array}{ll}
             1, & \hbox{if $\mathbf{x}\in M(R)$;} \\
             -1, & \hbox{if $\mathbf{x}\in O(R)$;} \\
             0, & \hbox{otherwise.}
           \end{array}
         \right.
\]
Let us extend this definition to any $R\in\mathfrak{R}$, the set of all disjoint unions of rectangles, i.e.\ if $\{R_i\}_{i=1}^{n}$ is an arbitrary finite set of rectangles, then an element of $\mathfrak{R}$ is of the form $R=\bigsqcup_{i=1}^nR_i$, where $\bigsqcup$ denotes the disjoint union, and we let $m_R(\mathbf{x})= \sum_{i=1}^n m_{R_i}(\mathbf{x})$. We are now in position to give the definition for the volume of an element $R\in \mathfrak{R}$ corresponding to the real valued function $A$
\[
    V_A(R)=\sum_{\mathbf{x}\in\II^2}A(\mathbf{x})m_R(\mathbf{x})
\]
and in the same way for $B$. It is obvious that this sum is actually finite. It is also clear that when specializing to quasi-copulas and rectangles this definition coincides with the usual definition of the volume.

\begin{lemma}\label{lemma:additive}
  The multiplicity of a given point and the volume corresponding to a given real valued function $A$ are additive:
\begin{description}
  \item[(a)] $m_{R_1\sqcup R_2}(\mathbf{x})=m_{R_1}(\mathbf{x})+m_{ R_2}(\mathbf{x})$;
  \item[(b)] $V_A(R_1\sqcup R_2)=V_A(R_1)+V_A( R_2)$.
\end{description}
\end{lemma}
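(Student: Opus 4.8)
The plan is to derive both statements directly from the definitions; the only content is careful bookkeeping of the disjoint-union indexing, so I would keep the argument short and formal. For part \textbf{(a)}, I would write each summand as a disjoint union of rectangles, say $R_1=\bigsqcup_{i=1}^m S_i$ and $R_2=\bigsqcup_{j=1}^n T_j$, so that by definition $R_1\sqcup R_2$ is the concatenated union $S_1\sqcup\cdots\sqcup S_m\sqcup T_1\sqcup\cdots\sqcup T_n$. Applying the defining formula $m_R(\mathbf{x})=\sum_i m_{R_i}(\mathbf{x})$ to this union and splitting the index range into the block coming from $R_1$ and the block coming from $R_2$ immediately yields $m_{R_1\sqcup R_2}(\mathbf{x})=m_{R_1}(\mathbf{x})+m_{R_2}(\mathbf{x})$ for every $\mathbf{x}\in\II^2$.

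For part \textbf{(b)}, I would invoke part \textbf{(a)} together with the linearity of the finite sum defining the volume. Expanding $V_A(R_1\sqcup R_2)=\sum_{\mathbf{x}\in\II^2}A(\mathbf{x})\,m_{R_1\sqcup R_2}(\mathbf{x})$, substituting the additivity from \textbf{(a)}, and distributing $A(\mathbf{x})$ over the two resulting terms splits the sum as $\sum_{\mathbf{x}}A(\mathbf{x})m_{R_1}(\mathbf{x})+\sum_{\mathbf{x}}A(\mathbf{x})m_{R_2}(\mathbf{x})=V_A(R_1)+V_A(R_2)$. Since each sum is finite, no convergence issue arises and the rearrangement is justified.

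The only point worth spelling out is that $\sqcup$ is genuinely a formal (indexed) operation rather than a set-theoretic union: a point $\mathbf{x}$ may be a defining corner of several constituent rectangles, and each such incidence contributes separately to the multiplicity. Consequently the splitting in \textbf{(a)} is merely a regrouping of a finite sum and needs no hypothesis on whether the $S_i$ and $T_j$ overlap geometrically; this is also why $m_R$ may take values outside $\{-1,0,1\}$ on a general element of $\mathfrak{R}$. I expect no real obstacle here: the lemma is purely formal and requires no monotonicity, groundedness, or copula assumption on $A$, holding for an arbitrary real-valued function. Its purpose is simply to license the additive manipulation of volumes of unions of rectangles in the arguments that follow.
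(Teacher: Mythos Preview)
Your proposal is correct and matches the paper's treatment: the paper states this lemma without proof, treating both parts as immediate consequences of the definitions, which is exactly what your bookkeeping argument does.
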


Let us define a function $L$ of $R\in\mathfrak{R}$, and functions $P_M$ and $P_O$ of $\mathbf{x}\in\II^2$, all depending also on the real valued functions $A$ and $B$
\begin{equation}\label{eq:LPmPo}
\begin{split}
   L^{(A,B)}(R) & =\sum_{\substack{\mathbf{y}\in\II^2\\m_R(\mathbf{y})>0}} B(\mathbf{y})m_R(\mathbf{y}) + \sum_{\substack{\mathbf{y}\in\II^2\\ m_R(\mathbf{y})<0}}A(\mathbf{y})m_R(\mathbf{y}) \\
   P_M^{(A,B)}(\mathbf{x})  & =\inf_{\substack{R\in\mathfrak{R}\\ m_R(\mathbf{x})>0}}\frac{L^{(A,B)}(R)}{m_R(\mathbf{x})} \quad\mbox{and}\quad P_O^{(A,B)}(\mathbf{x})= \inf_{\substack{R\in \mathfrak{R}\\ m_R(\mathbf{x})<0}} \frac{L^{(A,B)}(R)}{-m_R(\mathbf{x})},
\end{split}
\end{equation}
where infimum of an empty set is assumed equal to $+\infty$.

In the following propositions we will assume two conditions on the pair of real valued functions $(A,B)$ (to be later specialized to an imprecise copula):
\begin{description}
  \item[(Q1)] $A\leqslant B$, and
  \item[(Q2)] $L^{(A,B)}(R)\geqslant0$ for all $R\in\mathfrak{R}$.
\end{description}

In the following proposition we need the function $\gamma^{(A,B)} (\mathbf{x})= \min\{P_O^{(A,B)}(\mathbf{x}), B(\mathbf{x})- A(\mathbf{x})\}$ defined for $\mathbf{x}\in\DD$.

\begin{proposition}\label{prop:main}
  Let the pair of real valued functions $(A,B)$ satisfy Conditions $\mathbf{(Q1)}$, $\mathbf{(Q2)}$ and let there exist an $\mathbf{x} \in\DD$ such that $t_0=\gamma^{(A,B)}(\mathbf{x})>0$. Then the pair of real valued functions $(A',B)$, where
\[
    A'(\mathbf{y})=\left\{
                     \begin{array}{ll}
                       A(\mathbf{x})+t, & \hbox{if $\mathbf{y}= \mathbf{x}$;} \\
                       A(\mathbf{y}), & \hbox{otherwise;}
                     \end{array}
                   \right.
\]
satisfies conditions $\mathbf{(Q1)}, \mathbf{(Q2)}$ for any $t, 0<t\leqslant  t_0$. If we choose $t=t_0$, then $\gamma^{(A',B)} (\mathbf{x})=0$.
\end{proposition}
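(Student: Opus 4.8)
The plan is to track exactly how the three relevant quantities $L^{(A,B)}$, $P_O^{(A,B)}$ and $B-A$ change when we pass from $A$ to $A'$, exploiting that $A'$ differs from $A$ only at the single point $\mathbf{x}$. The one computation that drives everything is the effect of the modification on the functional $L$. Fix $R\in\mathfrak{R}$. In the defining sum for $L^{(A,B)}(R)$ the value of $A$ at $\mathbf{x}$ enters only through the term of multiplicity $m_R(\mathbf{x})$, and only when that multiplicity is negative: if $m_R(\mathbf{x})>0$ the point contributes via $B$, and if $m_R(\mathbf{x})=0$ it does not contribute at all. So I would first record the identity
\[
L^{(A',B)}(R)=\begin{cases} L^{(A,B)}(R), & m_R(\mathbf{x})\geqslant0,\\[2pt] L^{(A,B)}(R)+t\,m_R(\mathbf{x}), & m_R(\mathbf{x})<0, \end{cases}
\]
valid because $A'(\mathbf{x})=A(\mathbf{x})+t$ while $A'$ agrees with $A$ elsewhere.

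Next I would verify the two conditions. Condition \textbf{(Q1)} for $(A',B)$ reduces to the single inequality $A(\mathbf{x})+t\leqslant B(\mathbf{x})$, which holds because $t\leqslant t_0\leqslant B(\mathbf{x})-A(\mathbf{x})$ by the definition of $\gamma^{(A,B)}$. For \textbf{(Q2)} I use the displayed identity: rectangles with $m_R(\mathbf{x})\geqslant0$ keep their value of $L$, which is nonnegative by the \textbf{(Q2)} hypothesis on $(A,B)$, so only the case $m_R(\mathbf{x})<0$ needs attention. There the requirement $L^{(A',B)}(R)=L^{(A,B)}(R)+t\,m_R(\mathbf{x})\geqslant0$ is, after dividing by the positive number $-m_R(\mathbf{x})$, exactly $\tfrac{L^{(A,B)}(R)}{-m_R(\mathbf{x})}\geqslant t$. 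Taking the infimum over all such $R$ shows this is equivalent to $P_O^{(A,B)}(\mathbf{x})\geqslant t$, which again follows from $t\leqslant t_0\leqslant P_O^{(A,B)}(\mathbf{x})$. This proves \textbf{(Q2)} for every $t\in(0,t_0]$.

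Finally, for the statement with $t=t_0$ I would observe that the same identity yields a clean shift of the auxiliary quantities at $\mathbf{x}$: dividing the $m_R(\mathbf{x})<0$ case by $-m_R(\mathbf{x})$ and taking the infimum gives $P_O^{(A',B)}(\mathbf{x})=P_O^{(A,B)}(\mathbf{x})-t_0$, while trivially $B(\mathbf{x})-A'(\mathbf{x})=\bigl(B(\mathbf{x})-A(\mathbf{x})\bigr)-t_0$. Since the minimum of two numbers shifts by the same constant,
\[
\gamma^{(A',B)}(\mathbf{x})=\min\bigl\{P_O^{(A,B)}(\mathbf{x}),\,B(\mathbf{x})-A(\mathbf{x})\bigr\}-t_0=t_0-t_0=0 .
\]

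The computations are elementary, so the proof has no deep obstacle; the only points that need care are bookkeeping ones. First, the multiplicities $m_R(\mathbf{x})$ are arbitrary integers, not merely $\pm1$, so the normalization by $-m_R(\mathbf{x})$ and the passage between ``$L\geqslant0$ for all $R$'' and ``$P_O\geqslant t$'' must be carried out with the correct signs. Second, one must respect the convention that an infimum over the empty set is $+\infty$: if no $R$ has $m_R(\mathbf{x})<0$, then $P_O^{(A,B)}(\mathbf{x})=+\infty$, forcing $t_0=B(\mathbf{x})-A(\mathbf{x})$, and the shift identity for $P_O$ reads $+\infty=+\infty$ harmlessly, the value $0$ then being produced by the second term of the minimum. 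I expect this sign-and-convention bookkeeping, rather than any genuine difficulty, to be the only thing to watch.
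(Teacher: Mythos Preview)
Your proof is correct and follows essentially the same approach as the paper: the same key identity for $L^{(A',B)}(R)$, and the same treatment of \textbf{(Q1)} and \textbf{(Q2)}. For the final claim $\gamma^{(A',B)}(\mathbf{x})=0$ your exact shift identity $P_O^{(A',B)}(\mathbf{x})=P_O^{(A,B)}(\mathbf{x})-t_0$ is a slight streamlining of the paper's argument, which instead splits into the two cases $t_0=B(\mathbf{x})-A(\mathbf{x})$ and $t_0=P_O^{(A,B)}(\mathbf{x})$ and handles the latter via an $\varepsilon$-approximation.
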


\begin{proof}
  Clearly, we only need to show \textbf{(Q2)}. Choose any $R\in \mathfrak{R}$ and assume first that $m_R(\mathbf{x})\geqslant0$. Then $L^{(A',B)}(R)=L^{(A,B)}(R)\geqslant0$ because functions $A$ and $A'$ differ only at the chosen point $\mathbf{x}$ which appears in $L$ only as an argument of $B$ by \eqref{eq:LPmPo}. Now, if $m_R(\mathbf{x})<0$, then
\[
    t\leqslant t_0\leqslant P_O^{(A,B)}(\mathbf{x})\leqslant\frac{L^{(A,B)}(R)}{-m_R(\mathbf{x})}
\]
so that
\[
    L^{(A',B)}(R)=L^{(A,B)}(R)+tm_R(\mathbf{x})\geqslant0.
\]
Choose $t=t_0$. If $t_0=B(\mathbf{x})-A(\mathbf{x})$, then $B(\mathbf{x})-A'(\mathbf{x})=0$ and we are done. If $t_0=P_O^{(A,B)}(\mathbf{x})$ then for every $\varepsilon>0$ there is an $R\in\mathfrak{R}$ such that $m_R(\mathbf{x})<0$ and that $L^{(A,B)}(R)\leqslant -m_R(\mathbf{x})(t_0+\varepsilon)$ so that $L^{(A',B)}(R)\leqslant -m_R(\mathbf{x})\varepsilon$. Therefore, $P_O^{(A',B)}(\mathbf{x})\leqslant \varepsilon$ for all $\varepsilon>0$ and we are done again.
\end{proof}

\textbf{Remark.} Note that even if we started with quasi-copulas $(A,B)$ the function $A'$ would not be a quasi-copula in general.

\begin{proposition}\label{plus}
  Under the conditions $\mathbf{(Q1)},\mathbf{(Q2)}$ we have
\[
    P_M^{(A,B)}(\mathbf{x})+ P_O^{(A,B)}(\mathbf{x}) \geqslant B(\mathbf{x})-A(\mathbf{x})
\]
for all $\mathbf{x}\in[0,1]^2$.
\end{proposition}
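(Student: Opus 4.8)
The plan is to exploit a pointwise convexity structure hidden in $L^{(A,B)}$. For a fixed point $\mathbf{y}\in\II^2$ set $f_{\mathbf{y}}(m)=B(\mathbf{y})m$ for $m>0$, $f_{\mathbf{y}}(m)=A(\mathbf{y})m$ for $m<0$, and $f_{\mathbf{y}}(0)=0$, so that $L^{(A,B)}(R)=\sum_{\mathbf{y}}f_{\mathbf{y}}(m_R(\mathbf{y}))$. The first step is to observe that Condition $\mathbf{(Q1)}$ gives $f_{\mathbf{y}}(m)=\max\{A(\mathbf{y})m,\,B(\mathbf{y})m\}$: for $m>0$ we have $B(\mathbf{y})m\geqslant A(\mathbf{y})m$, while for $m<0$ the inequality reverses. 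Being a maximum of two linear functions, $f_{\mathbf{y}}$ is positively homogeneous, $f_{\mathbf{y}}(qm)=qf_{\mathbf{y}}(m)$ for $q>0$, and subadditive, $f_{\mathbf{y}}(m_1+m_2)\leqslant f_{\mathbf{y}}(m_1)+f_{\mathbf{y}}(m_2)$; the latter holds because both $A(\mathbf{y})(m_1+m_2)$ and $B(\mathbf{y})(m_1+m_2)$ are bounded above by $f_{\mathbf{y}}(m_1)+f_{\mathbf{y}}(m_2)$.

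If either infimum defining $P_M^{(A,B)}(\mathbf{x})$ or $P_O^{(A,B)}(\mathbf{x})$ is taken over the empty set, the corresponding value is $+\infty$ and the asserted inequality is trivial, so I may assume there exist $R_1,R_2\in\mathfrak{R}$ with $p:=m_{R_1}(\mathbf{x})>0$ and $q:=-m_{R_2}(\mathbf{x})>0$. The central step is to form the element $R\in\mathfrak{R}$ consisting of $q$ formal copies of $R_1$ together with $p$ formal copies of $R_2$, so that $m_R(\mathbf{y})=q\,m_{R_1}(\mathbf{y})+p\,m_{R_2}(\mathbf{y})$ for every $\mathbf{y}$; in particular $m_R(\mathbf{x})=qp-pq=0$, so $\mathbf{x}$ contributes nothing to $L^{(A,B)}(R)$. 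Applying positive homogeneity and subadditivity of each $f_{\mathbf{y}}$ termwise (over the points $\mathbf{y}\neq\mathbf{x}$) and isolating the vanishing contribution of $\mathbf{x}$, I obtain
\[
0\leqslant L^{(A,B)}(R)\leqslant q\bigl(L^{(A,B)}(R_1)-pB(\mathbf{x})\bigr)+p\bigl(L^{(A,B)}(R_2)+qA(\mathbf{x})\bigr),
\]
where $\mathbf{(Q2)}$ supplies the leftmost inequality. Rearranging turns this into $qL^{(A,B)}(R_1)+pL^{(A,B)}(R_2)\geqslant pq\bigl(B(\mathbf{x})-A(\mathbf{x})\bigr)$, and dividing by $pq=m_{R_1}(\mathbf{x})\cdot(-m_{R_2}(\mathbf{x}))$ yields
\[
\frac{L^{(A,B)}(R_1)}{m_{R_1}(\mathbf{x})}+\frac{L^{(A,B)}(R_2)}{-m_{R_2}(\mathbf{x})}\geqslant B(\mathbf{x})-A(\mathbf{x}).
\]
Since this holds for every admissible $R_1$ and every admissible $R_2$ independently, taking the infimum over each produces $P_M^{(A,B)}(\mathbf{x})+P_O^{(A,B)}(\mathbf{x})\geqslant B(\mathbf{x})-A(\mathbf{x})$, as required.

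The step I expect to require the most care is the middle one: correctly isolating the contribution of $\mathbf{x}$ from $L^{(A,B)}(R_1)$ and $L^{(A,B)}(R_2)$, namely $pB(\mathbf{x})$ and $-qA(\mathbf{x})$ respectively, since $m_{R_1}(\mathbf{x})>0$ selects $B$ while $m_{R_2}(\mathbf{x})<0$ selects $A$, and then checking that the subadditive bound applied to the remaining points $\mathbf{y}\neq\mathbf{x}$ combines with $\mathbf{(Q2)}$ precisely into the cross-term $pq\bigl(B(\mathbf{x})-A(\mathbf{x})\bigr)$. The formal manipulation of taking integer multiples of rectangles inside $\mathfrak{R}$ is legitimate because multiplicities are defined additively over disjoint unions, so $q$ copies of $R_1$ genuinely carry the multiplicity function $q\,m_{R_1}$, and this is what makes the cancellation $m_R(\mathbf{x})=0$ available.
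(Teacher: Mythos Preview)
Your proof is correct and follows essentially the same route as the paper: both arguments take $R_1,R_2\in\mathfrak{R}$ with multiplicities of opposite sign at $\mathbf{x}$, form the weighted disjoint union so that the multiplicity at $\mathbf{x}$ cancels, and then invoke $\mathbf{(Q2)}$ on that union. Your observation that $f_{\mathbf{y}}(m)=\max\{A(\mathbf{y})m,\,B(\mathbf{y})m\}$ is sublinear is precisely what the paper verifies through its four-case pointwise comparison of contributions, so the two proofs coincide in content, yours being a tidy abstraction of the case split.
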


\begin{proof}
  Let $R_1,R_2\in\mathfrak{R}$ be such that $m_{R_1}(\mathbf{x})<0$ and $m_{R_2}(\mathbf{x})>0$. Let us observe that
\begin{equation}\label{eq:quotient1}
  \frac{L^{(A,B)}(R_1)}{-m_{R_1}(\mathbf{x})} + \frac{L^{(A,B)}(R_2)}{m_{R_2}(\mathbf{x})} -(B(\mathbf{x})-A(\mathbf{x}))  = \frac{ S(R_1,R_2) } {(-{m_{R_1}(\mathbf{x})}){m_{R_2}(\mathbf{x})}},
\end{equation}
where
\[
\begin{split}
   S(R_1,R_2) & ={m_{R_2}(\mathbf{x})} L^{(A,B)}(R_1)+ (-{m_{R_1}(\mathbf{x})}) L^{(A,B)}(R_2) \\
     & -(-{m_{R_1}(\mathbf{x})}){m_{R_2}(\mathbf{x})} (B(\mathbf{x})-A(\mathbf{x})).
\end{split}
\]
After introducing
\[
    R_3=\left(\bigsqcup_{i=1}^{m_{R_2}(\mathbf{x})}R_1\right) \sqcup \left(\bigsqcup_{j=1}^{-m_{R_1}(\mathbf{x})}R_2\right)
\]
we want to show that
\begin{equation}\label{S}
  S(R_1,R_2)\geqslant L^{(A,B)}(R_3)
\end{equation}
the right-hand side of which is no smaller than zero by \textbf{(Q2)}; the proposition will then follow after taking the infima of the two quotients on the left-hand side of \eqref{eq:quotient1}. Using the fact that multiplicity of corners is additive by Lemma \ref{lemma:additive}\textbf{(a)} we observe that $m_{R_3}(\mathbf{y})=m_{R_2}(\mathbf{x})m_{R_1}(\mathbf{y})+ (-m_{R_1}(\mathbf{x}))m_{R_2}(\mathbf{y})$ for any point $\mathbf{y}$.
Recall the definition \eqref{eq:LPmPo} of functions $L$  to see that the first two terms of the function $S$ can be expanded as
\begin{align*}
&\sum_{m_{R_1}(y)>0} m_{R_2}(x)B(y)m_{R_1}(y) +\sum_{m_{R_1}(y)<0} m_{R_2}(x)A(y)m_{R_1}(y) +\\
&\sum_{m_{R_2}(y)>0} (-m_{R_1}(x))B(y)m_{R_2}(y) +\sum_{m_{R_2}(y)<0} (-m_{R_1}(x))A(y)m_{R_2}(y).
\end{align*}
With respect to the point $\mathbf{y}\neq\mathbf{x}$ in these sums we will consider four cases. Assume at first ${m_{R_1}(\mathbf{y})>0}$ and ${m_{R_2}(\mathbf{y})>0}$. Then the contribution of $\mathbf{y}$ to $S$ equals
\[
    B(\mathbf{y})(m_{R_2}(\mathbf{x})m_{R_1}(\mathbf{y})+ (-m_{R_1}(\mathbf{x}))m_{R_2}(\mathbf{y}))
\]
which is exactly equal to the contribution of $\mathbf{y}$ to the right-hand side of \eqref{S}. Next, assume  ${m_{R_1}(\mathbf{y})>0}$ and ${m_{R_2}(\mathbf{y})\leqslant0}$ to get that the contribution of  $\mathbf{y}$ to $S$ equals
\[
    B(\mathbf{y})m_{R_2}(\mathbf{x})m_{R_1}(\mathbf{y})+ A(\mathbf{y})(-m_{R_1}(\mathbf{x}))m_{R_2}(\mathbf{y}).
\]
In this case we are making this expression not greater when we replace either $B(\mathbf{y})$ with $A(\mathbf{y})$ or $A(\mathbf{y})$ with $B(\mathbf{y})$. So, this is not smaller than the contribution of $\mathbf{y}$ to the right-hand side of \eqref{S}. The case that ${m_{R_1}(\mathbf{y})\leqslant0}$ and ${m_{R_2}(\mathbf{y})>0}$ goes similarly. Now, if ${m_{R_1}(\mathbf{y})\leqslant0}$ and ${m_{R_2}(\mathbf{y})\leqslant0}$. Then the contribution of $\mathbf{y}$ to $S$ equals
\[
    A(\mathbf{y})(m_{R_2}(\mathbf{x})m_{R_1}(\mathbf{y})+ (-m_{R_1}(\mathbf{x}))m_{R_2}(\mathbf{y}))
\]
which is exactly equal to the contribution of $\mathbf{y}$ to the right-hand side of \eqref{S}. Finally, if $\mathbf{y}=\mathbf{x}$, then we get zero contribution on both sides of the desired inequality which finishes the proof.
\end{proof}

\begin{theorem}\label{thm:main}
  If under the conditions $\mathbf{(Q1)},\mathbf{(Q2)}$ we have
\begin{equation}\label{eq:copula}
  \min\{P_O^{(A,B)}(\mathbf{x}),B(\mathbf{x})-A(\mathbf{x})\}=0\quad \mbox{for all}\quad\mathbf{x},
\end{equation}
then $V_A(R)\geqslant0$ for all rectangles $R$.
\end{theorem}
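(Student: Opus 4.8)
The plan is to reduce the desired inequality $V_A(R)\ge 0$, for a single positively oriented rectangle $R$ with corners $\mathbf a,\mathbf b,\mathbf c,\mathbf d$, to a statement about $L^{(A,B)}$ and then to exploit the hypothesis \eqref{eq:copula} at the two main corners $\mathbf a,\mathbf c$. First I would record the identity
\[
 V_A(R)=L^{(A,B)}(R)-\bigl(B(\mathbf a)-A(\mathbf a)\bigr)-\bigl(B(\mathbf c)-A(\mathbf c)\bigr),
\]
which follows at once from $L^{(A,B)}(R)=B(\mathbf a)+B(\mathbf c)-A(\mathbf b)-A(\mathbf d)$ and the definition of $V_A$. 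Thus it suffices to prove $L^{(A,B)}(R)\ge (B-A)(\mathbf a)+(B-A)(\mathbf c)$. If the gap $B-A$ vanishes at both main corners this is immediate from \textbf{(Q2)}, so the real work concerns corners where the gap is positive; for such a corner $\mathbf x\in\{\mathbf a,\mathbf c\}$ hypothesis \eqref{eq:copula} forces $P_O^{(A,B)}(\mathbf x)=0$.

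The technical engine is the observation that $L^{(A,B)}$ is built pointwise from a single convex function. For each $\mathbf y$ set $f_{\mathbf y}(m)=\max\{A(\mathbf y)m,B(\mathbf y)m\}$, which by $A\le B$ equals $B(\mathbf y)m$ for $m\ge0$ and $A(\mathbf y)m$ for $m\le0$; it is positively homogeneous and, being piecewise linear and convex with $f_{\mathbf y}(0)=0$, it is subadditive. Then $L^{(A,B)}(R)=\sum_{\mathbf y}f_{\mathbf y}(m_R(\mathbf y))$ for every $R\in\mathfrak R$, and for a disjoint union one gets the deficit formula
\[
 \textstyle\sum_i L^{(A,B)}(R_i)-L^{(A,B)}\!\bigl(\bigsqcup_i R_i\bigr)=\sum_{\mathbf y}\Bigl[\sum_i f_{\mathbf y}(m_{R_i}(\mathbf y))-f_{\mathbf y}\!\bigl(\sum_i m_{R_i}(\mathbf y)\bigr)\Bigr]\ge0 .
\]

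Now I would realize the conversion of the main corners from ``$B$-valued'' to ``$A$-valued'' inside $\mathfrak R$. For each main corner $\mathbf x$ with positive gap, $P_O^{(A,B)}(\mathbf x)=0$ yields, for any $\eps>0$, a witness $S_{\mathbf x}\in\mathfrak R$ with $m_{S_{\mathbf x}}(\mathbf x)=-p_{\mathbf x}<0$ and $L^{(A,B)}(S_{\mathbf x})\le p_{\mathbf x}\eps$. Put $P=\prod_{\mathbf x}p_{\mathbf x}$ and form $W=R^{(P)}\sqcup\bigsqcup_{\mathbf x}S_{\mathbf x}^{(P/p_{\mathbf x})}$, where the exponent denotes the disjoint union of that many copies. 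At such a corner $\mathbf x$ the copies of $R$ and of $S_{\mathbf x}$ contribute $+P$ and $-P$ to the multiplicity, so only the cross contributions of the other witnesses survive; using homogeneity $f_{\mathbf x}(P)=PB(\mathbf x)$ and $(P/p_{\mathbf x})f_{\mathbf x}(-p_{\mathbf x})=-PA(\mathbf x)$, while subadditivity absorbs precisely those cross terms against $f_{\mathbf x}(m_W(\mathbf x))$, the deficit located at $\mathbf x$ is at least $P\,(B-A)(\mathbf x)$. Summing the (nonnegative) deficits and invoking $L^{(A,B)}(W)\ge0$ from \textbf{(Q2)} gives
\[
 P\,L^{(A,B)}(R)+\sum_{\mathbf x}\tfrac{P}{p_{\mathbf x}}L^{(A,B)}(S_{\mathbf x})\ \ge\ P\bigl[(B-A)(\mathbf a)+(B-A)(\mathbf c)\bigr];
\]
dividing by $P$, bounding each $L^{(A,B)}(S_{\mathbf x})/p_{\mathbf x}\le\eps$, and letting $\eps\to0$ delivers the required inequality, hence $V_A(R)\ge0$.

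The step I expect to be the main obstacle is controlling the interaction of the witnesses: a witness chosen for one main corner may carry nonzero multiplicity at the other main corner (or elsewhere), so naive additivity of $L^{(A,B)}$ fails. The key point that rescues the argument is that the per-point function $f_{\mathbf y}$ is positively homogeneous, so the unwanted cross multiplicities cancel exactly at each main corner and only add extra nonnegative deficit elsewhere; this is what pins the deficit at $\mathbf x$ to the full gap $(B-A)(\mathbf x)$ rather than something smaller. Two minor points must also be checked: when a main corner coincides with a corner of the unit square (for instance $\mathbf a=(0,0)$) no element of $\mathfrak R$ has negative multiplicity there and $P_O^{(A,B)}=+\infty$, but then groundedness and neutrality force the gap there to be $0$, so that corner falls under the trivial case; and for a single corner with positive gap the whole computation collapses to the one-line estimate $L^{(A,B)}(R)\ge P_M^{(A,B)}(\mathbf x)\ge (B-A)(\mathbf x)$ coming from $R$ itself together with Proposition \ref{plus}, which is a useful sanity check on the constants.
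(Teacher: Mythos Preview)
Your proof is correct and takes a genuinely different route from the paper's. The paper argues by contradiction: assuming $V_A(R)=v<0$, it first uses Proposition~\ref{plus} to force $P_O^{(A,B)}(\mathbf x_1)=P_O^{(A,B)}(\mathbf x_2)=0$ at both main corners, then picks witnesses $R_1,R_2$ and spends a separate sub-contradiction (building an auxiliary $R_3$) to establish the sign conditions $m_{R_2}(\mathbf x_1)\le 0$ and $m_{R_1}(\mathbf x_2)\le 0$; only after that can it assemble the large configuration $R_4$ and show $L^{(A,B)}(R_4)<0$. Your argument is direct: the observation that $L^{(A,B)}(R)=\sum_{\mathbf y}f_{\mathbf y}(m_R(\mathbf y))$ with $f_{\mathbf y}$ positively homogeneous and subadditive replaces all of the paper's pointwise case analysis by a single deficit inequality. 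In particular, the cross multiplicity $m_{S_{\mathbf c}}(\mathbf a)$ that the paper must control by a sign argument is absorbed automatically by homogeneity, since the contribution $p_{\mathbf a}f_{\mathbf a}(m_{S_{\mathbf c}}(\mathbf a))$ cancels exactly against $f_{\mathbf a}(m_W(\mathbf a))=f_{\mathbf a}(p_{\mathbf a}m_{S_{\mathbf c}}(\mathbf a))$. This is cleaner and more robust; the paper's approach, on the other hand, is more hands-on and makes the role of Proposition~\ref{plus} explicit at every stage.

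One small correction to your boundary remark: the theorem assumes only \textbf{(Q1)}, \textbf{(Q2)} and \eqref{eq:copula}, not groundedness or neutrality. The right justification at $\mathbf a=(0,0)$ (or $\mathbf c=(1,1)$) is the one you already wrote down just before: since $P_O^{(A,B)}=+\infty$ there, the hypothesis $\min\{P_O^{(A,B)},B-A\}=0$ itself forces $(B-A)(\mathbf a)=0$, so that corner falls under the trivial case without invoking any additional structure on $A,B$.
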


\begin{proof}
  We will prove this by contradiction. So, assume that $\mathbf{(Q1)}, \mathbf{(Q2)}$, and \eqref{eq:copula} hold and that there exists a rectangle $R$ such that $V_A(R)=v<0$. Let $\mathbf{x}_1$ respectively $\mathbf{x}_2$ be the southwest respectively the northeast corner of $R$. 
Since $\min\{P_O^{(A,B)}(\mathbf{x}_1),B(\mathbf{x}_1) -A(\mathbf{x}_1)\}=0$ we have either $P_O^{(A,B)}(\mathbf{x}_1)=0$ or $B(\mathbf{x}_1)-A(\mathbf{x}_1)=0$. If $P_O^{(A,B)}(\mathbf{x}_1)=0$ we have by Proposition \ref{plus} that
\[
    P_M^{(A,B)}(\mathbf{x}_1)\geqslant B(\mathbf{x}_1)-A(\mathbf{x}_1).
\]
If $B(\mathbf{x}_1)-A(\mathbf{x}_1)=0$ then we get the same conclusion by \textbf{(Q1)}. Since $m_R(\mathbf{x}_1)=1$ it follows immediately that
\[
    L^{(A,B)}(R)\geqslant P_M^{(A,B)}(\mathbf{x}_1)\geqslant B(\mathbf{x}_1)-A(\mathbf{x}_1).
\]
By the definition of $L$ this inequality is equivalent to
\[
    B(\mathbf{x}_2)-A(\mathbf{x}_2)\geqslant -v>0.
\]
Assumption \eqref{eq:copula} implies that $P_O^{(A,B)}(\mathbf{x}_2) =0$ and similar arguments yield $P_O^{(A,B)}(\mathbf{x}_1) =0$. By the definition of this function there exist elements $R_1,R_2\in \mathfrak{R}$ such that $m_{R_1}(\mathbf{x}_1)<0$, $m_{R_2}(\mathbf{x}_2)<0$, and such that
\begin{equation}\label{eq:estimates}
  \frac{L^{(A,B)}(R_1)}{-m_{R_1}(\mathbf{x}_1)}<-\frac{v}{2} \quad\mbox{and}\quad \frac{L^{(A,B)}(R_2)} {-m_{R_2}(\mathbf{x}_2)} <-\frac{v}{2}.
\end{equation}
We want to show that $m_{R_2}(\mathbf{x}_1) \leqslant 0$. We will prove this by a contradiction. Assume that $m_{R_2}(\mathbf{x}_1) >0$ and introduce
\[
R_3=R_2\sqcup \left (\bigsqcup_{i=1}^{-m_{R_2}(\mathbf{x}_2)}R\right)
\]
so that $m_{R_3}(\mathbf{x}_1)=m_{R_2}(\mathbf{x}_1)+ (-m_{R_2}(\mathbf{x}_2))>0$ and consequently
\begin{equation}\label{eq:positive1}
  P_M^{(A,B)}(x_1)\leqslant \frac{L^{(A,B)}(R_3)} {m_{R_3}(\mathbf{x}_1)}.
\end{equation}
Recall the considerations of the proof of Proposition \ref{plus} and estimate contributions of points $\mathbf{y}$ similarly to get
\[
  L^{(A,B)}(R_3)\leqslant L^{(A,B)}(R_2)+(B(\mathbf{x}_1)- A(\mathbf{x}_1)+ V_A(R))(-m_{R_2}(\mathbf{x}_2)),
\]
where we use the right one of the estimates \eqref{eq:estimates} and the fact that $V_A(R)=v$ to get
\begin{equation}\label{eq:positive2}
  L^{(A,B)}(R_3)< \left(\frac{v}{2}+B(\mathbf{x}_1)- A(\mathbf{x}_1)\right) (-m_{R_2}(\mathbf{x}_2)).
\end{equation}
Using Proposition \ref{plus} and combining inequalities \eqref{eq:positive1} and \eqref{eq:positive2} we get
\begin{equation*}
  \begin{split}
   0=P_O^{(A,B)}(x_1) & \geqslant -P_M^{(A,B)}(x_1)+B(\mathbf{x}_1)- A(\mathbf{x}_1) \\
     & >\frac{m_{R_2}(\mathbf{x}_2)}{m_{R_3}(\mathbf{x}_1)}\left(\frac{v}{2}+B(\mathbf{x}_1)- A(\mathbf{x}_1)\right)+B(\mathbf{x}_1)- A(\mathbf{x}_1)\\ &=\frac{m_{R_2}(\mathbf{x}_2)}{m_{R_3}(\mathbf{x}_1)}\,\frac{v}{2}+ \frac{m_{R_2} (\mathbf{x}_2)+m_{R_3}(\mathbf{x}_1) }{m_{R_3}(\mathbf{x}_1)} (B(\mathbf{x}_1)- A(\mathbf{x}_1)) \geqslant0.
\end{split}
\end{equation*}
This contradiction proves that $m_{R_2}(\mathbf{x}_1) \leqslant 0$. Similarly we get $m_{R_1}(\mathbf{x}_2) \leqslant 0$.\\

On the final step of the proof we introduce
\[
    R_4=\left(\bigsqcup_{i=1}^{-m_{R_2}(\mathbf{x}_2)}R_1\right) \sqcup \left(\bigsqcup_{j=1}^{-m_{R_1}(\mathbf{x}_1)}R_2\right) \sqcup\left(\bigsqcup_{k=1}^{(-m_{R_1}(\mathbf{x}_1)) (-m_{R_2}(\mathbf{x}_2))}R\right).
\]
Note that $m_{R_4}(\mathbf{x}_1)=(-m_{R_1}(\mathbf{x}_1))m_{R_2} (\mathbf{x}_1)\leqslant0$ and $m_{R_4}(\mathbf{x}_2)= (-m_{R_2} (\mathbf{x}_2)) m_{R_1}(\mathbf{x}_2)\leqslant0$.
Using the usual considerations of the detailed contributions of points and estimates \eqref{eq:estimates} we get
\begin{equation*}
  \begin{split}
     L^{(A,B)}(R_4) & \leqslant (-m_{R_2}(\mathbf{x}_2))L^{(A,B)}(R_1) + (-m_{R_1}(\mathbf{x}_1))L^{(A,B)}(R_2) + m_{R_1}(\mathbf{x}_1) m_{R_2}(\mathbf{x}_2) V_A(R) \\
       & <  (-m_{R_2}(\mathbf{x}_2)) m_{R_1}(\mathbf{x}_1) \frac{v}{2} + (-m_{R_1}(\mathbf{x}_1))m_{R_2}(\mathbf{x}_2) \frac{v}{2}  + m_{R_1}(\mathbf{x}_1) m_{R_2}(\mathbf{x}_2) v = 0
  \end{split}
\end{equation*}
in contradiction with \textbf{(Q2)} thus proving the desired result.
\end{proof}

In the following proposition we fix a mesh $\Delta=\delta_x\times \delta_y$. In the set of unions of rectangles $\mathfrak{R}$ we will consider only those made of rectangles with vertices from the mesh.

\begin{proposition}\label{main_proposition}
  Let $A\leqslant B$ be discrete quasi-copulas. Then, there exists a discrete copula $C$ with $A\leqslant C\leqslant B$ if and only if
\[
    L^{(A,B)}(R) \geqslant0
\]
for all $R\in \mathfrak{R}$.
\end{proposition}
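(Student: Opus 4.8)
The plan is to prove the two implications separately; the forward (``only if'') direction is a short term-by-term estimate, while the backward (``if'') direction is the payoff of the machinery already assembled in Propositions \ref{prop:main} and \ref{plus} and Theorem \ref{thm:main}.

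\textbf{Necessity.} Suppose a discrete copula $C$ with $A\leqslant C\leqslant B$ exists, and fix any $R=\bigsqcup_i R_i\in\mathfrak{R}$. Since $C$ is $2$-increasing we have $V_C(R_i)\geqslant0$ for each $i$, so by additivity $V_C(R)=\sum_i V_C(R_i)\geqslant0$. I then compare $L^{(A,B)}(R)$ with $V_C(R)=\sum_{\mathbf{y}}C(\mathbf{y})m_R(\mathbf{y})$ pointwise: where $m_R(\mathbf{y})>0$ the inequality $C\leqslant B$ gives $C(\mathbf{y})m_R(\mathbf{y})\leqslant B(\mathbf{y})m_R(\mathbf{y})$, and where $m_R(\mathbf{y})<0$ the inequality $A\leqslant C$ gives $C(\mathbf{y})m_R(\mathbf{y})\leqslant A(\mathbf{y})m_R(\mathbf{y})$ (the sign flips). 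Summing yields $V_C(R)\leqslant L^{(A,B)}(R)$, hence $L^{(A,B)}(R)\geqslant V_C(R)\geqslant0$.

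\textbf{Sufficiency.} Conversely assume $L^{(A,B)}(R)\geqslant0$ for all $R\in\mathfrak{R}$, so that $(A,B)$ satisfies $\mathbf{(Q1)}$ (by hypothesis $A\leqslant B$) and $\mathbf{(Q2)}$. The idea is to raise $A$ to a copula by repeatedly invoking Proposition \ref{prop:main}. The key observation is that raising the value of $A$ at a single point $\mathbf{x}$ can only decrease $\gamma^{(A,B)}$ at every point: increasing $A(\mathbf{x})$ leaves $L^{(A,B)}(R)$ unchanged whenever $m_R(\mathbf{x})\geqslant0$ and decreases it when $m_R(\mathbf{x})<0$, so each $P_O^{(A,B)}(\mathbf{y})$ is non-increasing, while $B-A$ is unchanged off $\mathbf{x}$ and decreases at $\mathbf{x}$. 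Because the mesh $\Delta$ is finite, I enumerate its points $\mathbf{x}_1,\dots,\mathbf{x}_N$ and process them in one pass: whenever $\gamma^{(A,B)}(\mathbf{x}_i)>0$ I replace $A$ by the function $A'$ of Proposition \ref{prop:main} with $t=t_0=\gamma^{(A,B)}(\mathbf{x}_i)$, which preserves $\mathbf{(Q1)},\mathbf{(Q2)}$ and forces $\gamma(\mathbf{x}_i)=0$. Since $\gamma\geqslant0$ throughout (as $P_O\geqslant0$ by $\mathbf{(Q2)}$ and $B-A\geqslant0$ by $\mathbf{(Q1)}$) and $\gamma$ is pointwise non-increasing under each raising step, a point once set to $\gamma=0$ stays there. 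A single pass therefore yields a function $C$ with $A\leqslant C\leqslant B$, with $(C,B)$ still satisfying $\mathbf{(Q1)},\mathbf{(Q2)}$, and with $\gamma^{(C,B)}(\mathbf{x})=0$ for every mesh point~$\mathbf{x}$.

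It remains to identify $C$ as a discrete copula, and here is where the main obstacle — the bookkeeping that zeroing $\gamma$ at a new point must not revive it at an already-treated point — has been resolved by the monotonicity remark together with the finiteness of the mesh. Applying Theorem \ref{thm:main} to $(C,B)$, whose hypothesis $\min\{P_O^{(C,B)}(\mathbf{x}),B(\mathbf{x})-C(\mathbf{x})\}=0$ is exactly $\gamma^{(C,B)}(\mathbf{x})=0$, gives $V_C(R)\geqslant0$ for all rectangles $R$, i.e.\ $C$ is $2$-increasing. Finally, since $A$ and $B$ are discrete quasi-copulas they are grounded and have $1$ as a neutral element, so they coincide on the entire boundary of the mesh; as $A\leqslant C\leqslant B$, the function $C$ is squeezed to those same boundary values and is therefore grounded with $1$ as a neutral element. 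Hence $C$ is a discrete copula lying in $[A,B]$, completing the proof. (I expect the finiteness of $\Delta$ to be essential to the termination of the raising procedure, which is presumably why the statement is phrased in the discrete setting.)
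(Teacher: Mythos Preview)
Your proof is correct and follows essentially the same approach as the paper's: the necessity direction via $L^{(A,B)}(R)\geqslant L^{(C,C)}(R)=V_C(R)\geqslant0$, and the sufficiency direction by iterating Proposition~\ref{prop:main} over the finitely many mesh points to drive $\gamma$ to zero everywhere, then invoking Theorem~\ref{thm:main}. Your explicit justification of the monotonicity of $\gamma$ under each raising step (and hence of termination after a single pass) spells out what the paper records more tersely as $\gamma^{(A,B)}\geqslant\gamma^{(A',B)}\geqslant0$, but the argument is the same.
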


\begin{proof}
  We first assume that a copula $C$ like that exists. Then for every $R\in \mathfrak{R}$ we estimate
\[
L^{(A,B)}(R) \geqslant L^{(C,C)}(R) = V_C(R)\geqslant 0.
\]
So, our condition is necessary. Let us show that it is also sufficient. Assume that $L^{(A,B)}(R) \geqslant0$. This implies that the function $\gamma^{(A,B)}(\mathbf{x}) =\min\{P_O^{(A,B)} (\mathbf{x}), B(\mathbf{x})-A(\mathbf{x})\}$ is nonnegative for all $\mathbf{x}$ in the mesh.

Note that any $C$ such that $A\leqslant C\leqslant B$ is automatically grounded and has 1 as a neutral element since $A$ and $B$ have these properties. So, we only have to show nonnegativity of the volumes of rectangles. Choose a point $\mathbf{x}_0\in\Delta$ such that $t= \gamma^{(A,B)}(\mathbf{x}_0) >0$. If no such point exists, we have reached the desired conclusion by Theorem \ref{thm:main}. By Proposition \ref{prop:main} we can replace function $A$ by function $A'$ defined by $A'(\mathbf{x}_0)=A(\mathbf{x}_0)+t$ and $A'(\mathbf{x})=A(\mathbf{x})$ for all $\mathbf{x}\neq \mathbf{x}_0$ in the mesh. It follows clearly that $B \geqslant A' \geqslant A$ and therefore $L^{(A,B)}(R) \geqslant L^{(A',B)}(R)$, $P_O^{(A,B)} (\mathbf{x}) \geqslant P_O^{(A',B)}(\mathbf{x})$ and $\gamma^{(A,B)} (\mathbf{x}) \geqslant \gamma^{(A',B)}(\mathbf{x})$ for all $R$ and $\mathbf{x}$. Proposition \ref{prop:main} implies $L^{(A',B)}(R) \geqslant 0$ for all $R$, hence $P_O^{(A',B)}(\mathbf{x}) \geqslant 0$ and $\gamma^{(A',B)}(\mathbf{x}) \geqslant 0$ for all $\mathbf{x}$. In addition, $\gamma^{(A',B)}(\mathbf{x}_0)=0$ again by Proposition \ref{prop:main}. We can repeat this procedure for any $\mathbf{x}_0$ such that $\gamma^{(A,B)}(\mathbf{x}_0)>0$. Since the mesh is finite we are done in a finite number of steps.
\end{proof}

\begin{theorem}\label{main_theorem}
  Let $A\leqslant B$ be quasi-copulas. Then, there exists a copula $C$ with $A\leqslant C\leqslant B$ if and only if
\[
    L^{(A,B)}(R) \geqslant0
\]
for all $R\in \mathfrak{R}$.
\end{theorem}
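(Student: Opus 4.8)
The plan is to reduce the general (continuous) statement to its already-established discrete counterpart, Proposition~\ref{main_proposition}, and then pass to a limit. The necessity direction is immediate and mirrors the discrete argument: if a copula $C$ satisfies $A\leqslant C\leqslant B$, then at every point $\mathbf{y}$ with $m_R(\mathbf{y})>0$ we have $B(\mathbf{y})\geqslant C(\mathbf{y})$, and at every point with $m_R(\mathbf{y})<0$ we have $A(\mathbf{y})\leqslant C(\mathbf{y})$; multiplying by the (signed) multiplicities and summing gives, term by term, $L^{(A,B)}(R)\geqslant L^{(C,C)}(R)=V_C(R)\geqslant 0$, where the final inequality uses additivity of $V_C$ over disjoint unions together with the $2$-increasingness of the copula $C$.

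For sufficiency I would fix the nested dyadic meshes $\Delta_n=\delta_x^{(n)}\times\delta_y^{(n)}$ with $\delta_x^{(n)}=\delta_y^{(n)}=\{k2^{-n}\,;\,0\leqslant k\leqslant 2^n\}$, so that $\Delta_n\subseteq\Delta_{n+1}$ and $\bigcup_n\Delta_n$ is dense in $\II^2$. Restricting $A$ and $B$ to $\Delta_n$ yields discrete quasi-copulas $A|_{\Delta_n}\leqslant B|_{\Delta_n}$ (restrictions of quasi-copulas are discrete quasi-copulas, by the considerations of Section~\ref{sec:discrete}). Since the value of $L$ at a union of rectangles depends only on the function values at the finitely many relevant corners, the hypothesis $L^{(A,B)}(R)\geqslant 0$ for all $R\in\mathfrak{R}$ forces in particular $L^{(A|_{\Delta_n},B|_{\Delta_n})}(R)\geqslant 0$ for every union of rectangles with vertices in $\Delta_n$. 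Proposition~\ref{main_proposition} then supplies, for each $n$, a discrete copula $C_n$ on $\Delta_n$ with $A|_{\Delta_n}\leqslant C_n\leqslant B|_{\Delta_n}$, and bilinear interpolation produces genuine copulas $C_n^{\mathrm{BL}}$ on $\II^2$ by Proposition~\ref{copula_properties}.

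Because all copulas are $1$-Lipschitz and uniformly bounded, the family $\{C_n^{\mathrm{BL}}\}$ is equicontinuous, so by the Arzel\`a--Ascoli theorem I can extract a subsequence $C_{n_k}^{\mathrm{BL}}$ converging uniformly to some function $C$; as a uniform limit of copulas, $C$ is again a copula. It then remains only to verify $A\leqslant C\leqslant B$. For any dyadic point $\mathbf{x}$ there is an $N$ with $\mathbf{x}\in\Delta_N$, and by nestedness $\mathbf{x}\in\Delta_n$ for all $n\geqslant N$, whence $A(\mathbf{x})\leqslant C_n^{\mathrm{BL}}(\mathbf{x})\leqslant B(\mathbf{x})$ for all such $n$; letting $k\to\infty$ along the subsequence gives $A(\mathbf{x})\leqslant C(\mathbf{x})\leqslant B(\mathbf{x})$ at every dyadic point. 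Since $A$ and $B$ are continuous (being quasi-copulas), $C$ is continuous, and the dyadic points are dense, the inequality extends to all of $\II^2$, completing the proof.

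I expect the main obstacle to be organizing the limit so that a single limiting copula $C$ is simultaneously trapped between $A$ and $B$ at \emph{all} points: this is exactly why I work with nested meshes, so that each fixed mesh point is caught between $A$ and $B$ for all sufficiently large $n$ and the bound survives passage to the uniform limit, after which density and continuity finish the job. The two remaining points requiring care---that the discrete hypothesis is genuinely inherited from the general one, and that restrictions of quasi-copulas are discrete quasi-copulas whose bilinear interpolations recover copulas---are routine given the machinery of Section~\ref{sec:discrete}.
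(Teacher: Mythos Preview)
Your proof is correct and follows essentially the same route as the paper's: reduce sufficiency to the discrete Proposition~\ref{main_proposition} on nested dyadic meshes, bilinearly extend, extract a convergent subsequence by compactness, and recover the pointwise bounds by density and continuity. Your presentation is in fact slightly tidier in two places: you prove necessity by the direct one-line estimate $L^{(A,B)}(R)\geqslant L^{(C,C)}(R)=V_C(R)\geqslant0$ rather than passing through a mesh, and you obtain $A\leqslant C\leqslant B$ by evaluating at fixed dyadic points (where $C_n^{\mathrm{BL}}$ equals $C_n$) instead of introducing the auxiliary sequences $\breve{A}_n,\breve{B}_n$ and taking further subsequences as the paper does.
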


\begin{proof}
  We first assume that there exists a copula $C$ such that $A\leqslant C\leqslant B$ and choose an $R\in \mathfrak{R}$. Note that $R$ is made of a finite number of rectangles that have a finite union of all possible corners. So, there exists a mesh $\Delta$ containing all these corners. Now, observe that $\langle A\rangle=A|_\Delta$ and $\langle B\rangle=B|_\Delta$ are quasi-copulas, that $\langle C\rangle= C|_\Delta$ is a copula by the remark immediately following the statement of Proposition \ref{copula_properties}, and that $\langle A\rangle\leqslant \langle C\rangle\leqslant\langle B\rangle$. So, the desired conclusion follows by Proposition \ref{main_proposition}.

To get the inverse implication, assume that the condition of the theorem is fulfilled for all $R\in\mathfrak{R}$. Choose a sequence of meshes
$\Delta_n\subseteq\Delta_{n+1}$ for $n\in\mathds{N}$ whose union of corners is dense in $\II^2$. (One may choose, say, $\Delta_n= \delta_{x^n} \times \delta_{y^n}$ determined by points
\[
    \delta_{x^n}=\delta_{y^n}=\left\{\frac{k}{2^n}\right\}_{k=0}^{2^n}.
\]
for $n=1,2,\ldots$.) Now, fix an $n\in\mathds{N}$, let $\mathfrak{R}_n$ be the set of disjoint unions of rectangles with corners in $\Delta_n$, and let $\langle A\rangle_n=A|_{\Delta_n}$ and $\langle B\rangle_n =B|_{\Delta_n}$. Then, these objects satisfy the assumptions of Proposition \ref{main_proposition}, so that there exists a discrete copula $C_n$ on $\Delta_n$ such that $\langle A\rangle_n\leqslant C_n\leqslant \langle B\rangle_n$. For any $n\in\mathds{N}$ extend the discrete copula $C_n$ to a general copula $\breve{C}_n=(C_n)^{BL}$. Since the set of copulas is compact by \cite[Theorem 1.7.7]{DuSe} there exists a subsequence $\breve{C}_{n_k},k\in \mathds{N}$, converging uniformly to a copula $C$. Now,
\[
    \breve{A}_n=\langle A\rangle_n^{\mathrm{BL}}\leqslant \breve{C}_n\leqslant \langle B\rangle_n^{\mathrm{BL}}=\breve{B}_n
\]
on $\DD=\II^2$ and by going twice to subsequences, if necessary, we may assume with no loss that $\breve{A}_{n_k}$ respectively $\breve{B}_{n_k}$ also converge to, say,  $\breve{A}$ respectively $\breve{B}$, so that, necessarily,
\[
    \breve{A}\leqslant C\leqslant \breve{B}.
\]
It remains to show that $\breve{A}=A$ and $\breve{B}=B$ in order to finish the proof of the theorem. On the way to see that we fix a $k\in \mathds{N}$ and choose a point $\mathbf{x}\in\Delta_{n_k}$. Clearly,
\[
    \breve{A}(\mathbf{x})=\breve{A}_{n_k}(\mathbf{x})= A|_{\Delta_{n_k}}(\mathbf{x})=A(\mathbf{x})\ \ \mbox{and similarly}\ \ \breve{B}(\mathbf{x})=B(\mathbf{x}).
\]
Since the union of all points of the kind is dense in $\DD=\II^2$ the desired result follows by the fact that quasi-copulas have the 1-Lipschitz property. Indeed, for any $\varepsilon>0$ we may choose $k\in\mathds{N}$ large enough such that all distances of the points in either $\delta_x$ or $\delta_y$ are no greater than $\displaystyle \frac{\varepsilon}{4}$. For any $\mathbf{x}\in\II^2$ we may then choose its closest point $\mathbf{y}$ in $\Delta_{n_k}$ and estimate
\[
    |\breve{A}(\mathbf{x})-A(\mathbf{x})|\leqslant |\breve{A}(\mathbf{x})- \breve{A}(\mathbf{y})| + |\breve{A}(\mathbf{y})-A(\mathbf{y})| + |A(\mathbf{y})-A(\mathbf{x})| \leqslant \frac{\varepsilon}{4} + \frac{\varepsilon}{4} +0+\frac{\varepsilon}{4}+\frac{\varepsilon}{4} = \varepsilon.
\]
Now, the leftmost expression of this display is independent of $k$, while the rightmost one can be made arbitrary small with $k$ going to infinity. This implies $\breve{A}(\mathbf{x})=A(\mathbf{x})$ and similarly for $B$.
\end{proof}

\section{The main results}\label{sec:main}

Using the theory developed so far we are now in position to give an example of an imprecise copula $(A,B)$ such that there is no copula $C$ between $A$ and $B$. Let us formulate the problem presented at the very end of \cite{DiSaPlMeKl} (cf.\ also \cite{PeViMoMi2,MoMiPeVi}) precisely. Define
\[
    \mathcal{C}(A,B)=\{C\,|\,C\ \mbox{copula}, A\leqslant C\leqslant B\} = [A,B]\cup\mathcal{C},
\]
where $\mathcal{C}$ is the set of all copulas and the ordered interval $[A,B]$ was defined in Section \ref{sec:bounds} immediately after Lemma \ref{lem:interval}. Observe that in this definition we only need $A$ and $B$ to be functions on $\II^2$ such that $A\leqslant B$ which is fulfilled, in particular, whenever $(A,B)$ is an imprecise copula. Let us propose the question in three steps:
\begin{enumerate}[Q.I]
  \item When is $\mathcal{C}(A,B)$ nonempty? In particular, can it be empty for some imprecise copula $(A,B)$?
  \item If $\mathcal{C}(A,B)\neq\emptyset$ is it possible that
\[
    \bigvee\mathcal{C}(A,B)=B ?
\]
  \item If $\mathcal{C}(A,B)\neq\emptyset$ is it possible that
\[
    \bigwedge\mathcal{C}(A,B)=A ?
\]
\end{enumerate}

Theorem \ref{main_theorem} gives a quite general answer to Q.I, saying that a pair of quasi-copulas $A,B$ with $A\leqslant B$ has $\mathcal{C}(A,B)$ nonempty if an only if $L^{(A,B)}(R) \geqslant0$. To answer the proposed question it suffices to solve the discrete version of the problem. Indeed, if we find a discrete imprecise copula $(\langle A\rangle,\langle B\rangle)$ such that there is no discrete copula $\langle C\rangle$ with the property $\langle A\rangle\leqslant\langle C\rangle\leqslant\langle B\rangle$, then we know by Proposition \ref{main_proposition} that there exists a disjoint union of rectangles $R$ (with corners in the corresponding mesh) such that $L^{(\langle A\rangle,\langle B\rangle)}(R)\ngeqslant0$. It now suffices to extend this discrete imprecise copula to a general one, say, by defining
\[
    A=\langle A\rangle^{\mathrm{BL}}\ \ \mbox{and}\ \ B=\langle B\rangle^{\mathrm{BL}}
\]
to get
\begin{equation}\label{anticondition}
  L^{(A,B)}(R)=L^{(\langle A\rangle,\langle B\rangle)}(R)\ngeqslant0
\end{equation}
and the imprecise copula $(A,B)$ has the desired property by Theorem \ref{main_theorem}.

Towards the discrete example we fix a mesh $\Delta=\delta_x\times \delta_y$ where $\displaystyle x_k=y_k =\frac{k}{10}$ for $k=0,1,\ldots, 10$, and let the value of a discrete imprecise copula $(\langle A\rangle, \langle B\rangle)$ be given firstly by
\[
    \langle A\rangle\left(\frac{j}{10},\frac{k}{10}\right)=[A]_{j+1,k+1}\ \ 
\mbox{for}\ \ j,k=0,1,\ldots,10,
\]
where $[A]$ is the matrix
\[
A=\frac{1}{50}\left[
\begin{array}{ccccccccccc}
 0 & 0 & 0 & 0 & 0 & 0 & 0 & 0 & 0 & 0 & 0 \\
 0 & 0 & 1 & 2 & 3 & 4 & 5 & 5 & 5 & 5 & 5 \\
 0 & 1 & 2 & 3 & 3 & 4 & 5 & 10 & 10 & 10 & 10 \\
 0 & 2 & 2 & 5 & 7 & 7 & 8 & 13 & 15 & 15 & 15 \\
 0 & 3 & 3 & 6 & 7 & 7 & 8 & 13 & 18 & 20 & 20 \\
 0 & 4 & 4 & 6 & 9 & 11 & 11 & 16 & 21 & 25 & 25 \\
 0 & 5 & 5 & 7 & 9 & 11 & 11 & 16 & 21 & 26 & 30 \\
 0 & 5 & 10 & 12 & 14 & 16 & 16 & 21 & 26 & 31 & 35 \\
 0 & 5 & 10 & 15 & 19 & 21 & 21 & 26 & 31 & 36 & 40 \\
 0 & 5 & 10 & 15 & 20 & 25 & 26 & 31 & 36 & 41 & 45 \\
 0 & 5 & 10 & 15 & 20 & 25 & 30 & 35 & 40 & 45 & 50 \\
\end{array}
\right],
\]
and secondly, by defining $\langle B\rangle=\langle A\rangle_M$. A short computation reveals that the matrix corresponding to $D_M^{\langle A\rangle}$ equals
\[
\left[D_M^{\langle A\rangle}\right] = \frac{1}{50}\left[
\begin{array}{ccccccccccc}
 0 & 0 & 0 & 0 & 0 & 0 & 0 & 0 & 0 & 0 & 0 \\
 0 & -1 & -1 & -1 & -1 & -1 & 0 & 0 & 0 & 0 & 0 \\
 0 & -1 & 0 & 0 & -1 & -1 & -1 & 0 & 0 & 0 & 0 \\
 0 & -1 & -1 & -1 & 0 & -1 & -1 & -1 & 0 & 0 & 0 \\
 0 & -1 & -1 & 0 & -1 & -1 & -1 & -1 & 0 & 0 & 0 \\
 0 & -1 & -1 & -1 & 0 & 0 & -1 & -1 & -1 & 0 & 0 \\
 0 & 0 & -1 & -1 & -1 & -1 & -1 & -1 & -1 & 0 & 0 \\
 0 & 0 & 0 & -1 & -1 & -1 & -1 & -1 & -1 & 0 & 0 \\
 0 & 0 & 0 & 0 & 0 & 0 & -1 & -1 & -1 & 0 & 0 \\
 0 & 0 & 0 & 0 & 0 & 0 & 0 & 0 & 0 & 0 & 0 \\
 0 & 0 & 0 & 0 & 0 & 0 & 0 & 0 & 0 & 0 & 0 \\
\end{array}
\right],
\]
so that
\[
    \langle B\rangle\left(\frac{j}{10},\frac{k}{10}\right)=[B]_{j+1,k+1}\ \ \mbox{for}\ \ j,k=0,1,\ldots,10,
\]
where $[B]$ is the matrix
\[
    [B]=[A]- \left[D_M^{\langle A\rangle}\right] =\frac{1}{50}\left[
\begin{array}{ccccccccccc}
 0 & 0 & 0 & 0 & 0 & 0 & 0 & 0 & 0 & 0 & 0 \\
 0 & 1 & 2 & 3 & 4 & 5 & 5 & 5 & 5 & 5 & 5 \\
 0 & 2 & 2 & 3 & 4 & 5 & 6 & 10 & 10 & 10 & 10 \\
 0 & 3 & 3 & 6 & 7 & 8 & 9 & 14 & 15 & 15 & 15 \\
 0 & 4 & 4 & 6 & 8 & 8 & 9 & 14 & 18 & 20 & 20 \\
 0 & 5 & 5 & 7 & 9 & 11 & 12 & 17 & 22 & 25 & 25 \\
 0 & 5 & 6 & 8 & 10 & 12 & 12 & 17 & 22 & 26 & 30 \\
 0 & 5 & 10 & 13 & 15 & 17 & 17 & 22 & 27 & 31 & 35 \\
 0 & 5 & 10 & 15 & 19 & 21 & 22 & 27 & 32 & 36 & 40 \\
 0 & 5 & 10 & 15 & 20 & 25 & 26 & 31 & 36 & 41 & 45 \\
 0 & 5 & 10 & 15 & 20 & 25 & 30 & 35 & 40 & 45 & 50 \\
\end{array}
\right].
\]
The pair $(\langle A\rangle,\langle B\rangle)$ is a discrete imprecise copula by Theorem \ref{interval}. It remains to find a disjoint union $R$ of rectangles with corners in this mesh such that Condition \eqref{anticondition} is fulfilled. In order to find the right $R$ we write down the volumes with respect to $A$ of the small rectangles determining the mesh:
\newcommand{\cc}{\cellcolor{black!15!white}}
$$V=\frac{1}{50}\left[
\begin{array}{cccccccccc}
 0 & 1 & 1 & 1 & 1 & 1 & 0 & 0 & 0 & 0 \\
 1 & \cc 0 & \cc 0 & \cc -1 & \cc 0 & \cc 0 & 5 & 0 & 0 & 0 \\
 1 & \cc -1 & 2 & 2 & \cc -1 & \cc 0 & 0 & 2 & 0 & 0 \\
 1 & \cc 0 & \cc 0 & \cc -1 & \cc 0 & \cc 0 & 0 & 3 & 2 & 0 \\
 1 & \cc 0 & \cc -1 & 2 & 2 & \cc -1 & 0 & 0 & 2 & 0 \\
 1 & \cc 0 & \cc 0 & \cc -1 & \cc 0 & \cc 0 & 0 & 0 & 1 & 4 \\
 0 & 5 & 0 & 0 & 0 & 0 & 0 & 0 & 0 & 0 \\
 0 & 0 & 3 & 2 & 0 & 0 & 0 & 0 & 0 & 0 \\
 0 & 0 & 0 & 1 & 3 & 1 & 0 & 0 & 0 & 0 \\
 0 & 0 & 0 & 0 & 0 & 4 & 0 & 0 & 0 & 1 \\
\end{array}
\right]$$
We can think of the entries of this matrix as a representation of the discrete quasi-copula in question on the given mesh. However, due to its matrix presentation the position of the main and the opposite corners seems to be interchanged. For example, the most northwest corner of the shaded region in the above image of matrix $V$ is a main one.

We observe two ``hills'' of hight $2/50$, presented as two holes in the shaded region, surrounded by seven depressions of depth $-1/50$, which are all contained in the shaded region. The rest of the volumes were chosen in such a way that they allowed us to construct the above pair of quasi-copulas. The $R$ that satisfies Condition \eqref{anticondition} is represented by the shaded region. So, it is defined exactly as the union of 21 small squares (i.e.\ of those that are determining the mesh) of the region and the desired fact now follows by a simple calculation. Observe that $V_{\langle A \rangle}(R)=-7$ and $R$ has $6$ corners with positive multiplicity, all with multiplicity $1$, and the value of $D_M^{\langle A \rangle}$ at all these corners is $-1$. Hence
\begin{align*}
L^{(\langle A \rangle,\langle B \rangle)}(R) &=\sum_{m_R(\mathbf{y}) \neq 0} \langle A \rangle(\mathbf{y})m_R(\mathbf{y})+ \sum_{m_R(\mathbf{y})>0}(\langle B \rangle(\mathbf{y})- \langle A \rangle(\mathbf{y})) m_R(\mathbf{y})=\\
&=V_{\langle A \rangle}(R)+\sum_{m_R(\mathbf{y})>0}(-D_M^{\langle A \rangle}(\mathbf{y}))m_R(\mathbf{y})=-7+6=-1.
\end{align*}

\begin{example}\label{maincounterex}
  There exists an imprecise copula $(A,B)$ such that $\mathcal{C}(A,B)= \emptyset$.
\end{example}

We now give answers to Q.II and Q.III.

\begin{theorem}\label{mainforward}
  Let $A\leqslant B$ be quasi-copulas and $\mathcal{C}(A,B)\neq \emptyset$. Then\vskip3mm
  \begin{enumerate}[(a)]
  \item $\displaystyle
    B=\bigvee\mathcal{C}(A,B)\ \ \mbox{if and only if}\ \ B(\mathbf{x}) -A(\mathbf{x}) \leqslant P_O^{(A,B)}(\mathbf{x})$ for all $x\in\II^2$.\\
  \item $\displaystyle
    A=\bigwedge\mathcal{C}(A,B)\ \ \mbox{if and only if}\ \ B(\mathbf{x})- A(\mathbf{x}) \leqslant P_M^{(A,B)}(\textbf{x})$ for all $x\in\II^2$.
  \end{enumerate}\vskip3mm
\end{theorem}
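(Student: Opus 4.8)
The plan is to prove part (a) in full and then obtain part (b) from it via the reflection $C\mapsto C^\sigma$ introduced before Lemma \ref{reflection}. The single computation underlying everything is the following comparison, valid for any copula $C\in\mathcal{C}(A,B)$ and any $R\in\mathfrak{R}$: since $A\leqslant C\leqslant B$, every summand in
\[
    L^{(A,B)}(R)-V_C(R)=\sum_{\substack{\mathbf{y}\\ m_R(\mathbf{y})>0}}(B(\mathbf{y})-C(\mathbf{y}))m_R(\mathbf{y}) +\sum_{\substack{\mathbf{y}\\ m_R(\mathbf{y})<0}}(A(\mathbf{y})-C(\mathbf{y}))m_R(\mathbf{y})
\]
is nonnegative, while $V_C(R)\geqslant0$ because $C$ is a copula. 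Hence $L^{(A,B)}(R)\geqslant V_C(R)$, and moreover $L^{(A,B)}(R)$ already dominates the single contribution coming from any one prescribed point.

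For the necessity half of (a), I assume $B=\bigvee\mathcal{C}(A,B)$, fix $\mathbf{x}$ and any $R$ with $m_R(\mathbf{x})<0$, and pick $C\in\mathcal{C}(A,B)$ with $C(\mathbf{x})>B(\mathbf{x})-\eps$. Discarding all the nonnegative summands except the one at $\mathbf{x}$ gives $L^{(A,B)}(R)\geqslant (C(\mathbf{x})-A(\mathbf{x}))(-m_R(\mathbf{x}))>(B(\mathbf{x})-A(\mathbf{x})-\eps)(-m_R(\mathbf{x}))$; dividing by $-m_R(\mathbf{x})$, letting $\eps\to0$ and taking the infimum over $R$ yields $P_O^{(A,B)}(\mathbf{x})\geqslant B(\mathbf{x})-A(\mathbf{x})$. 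The necessity half of (b) is identical with the roles reversed: one takes $m_R(\mathbf{x})>0$ and $C(\mathbf{x})<A(\mathbf{x})+\eps$, obtaining $P_M^{(A,B)}(\mathbf{x})\geqslant B(\mathbf{x})-A(\mathbf{x})$.

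For the sufficiency half of (a), I assume $B(\mathbf{x})-A(\mathbf{x})\leqslant P_O^{(A,B)}(\mathbf{x})$ for all $\mathbf{x}$, that is $\gamma^{(A,B)}=B-A$. Since $\bigvee\mathcal{C}(A,B)$ and $B$ are quasi-copulas and hence continuous, and the former is always $\leqslant B$, it suffices to produce, for each dyadic point $\mathbf{x}_0$, copulas in $\mathcal{C}(A,B)$ whose value at $\mathbf{x}_0$ approaches $B(\mathbf{x}_0)$. I fix a mesh $\Delta_n\ni\mathbf{x}_0$ from the sequence used in Theorem \ref{main_theorem} and restrict to $\langle A\rangle_n=A|_{\Delta_n}$, $\langle B\rangle_n=B|_{\Delta_n}$; the discrete $P_O$ is an infimum over fewer rectangles, so it can only increase, whence $\gamma^{(\langle A\rangle_n,\langle B\rangle_n)}(\mathbf{x}_0)=\langle B\rangle_n(\mathbf{x}_0)-\langle A\rangle_n(\mathbf{x}_0)$, and Condition \textbf{(Q2)} holds because $\mathcal{C}(A,B)\neq\emptyset$ forces $L^{(A,B)}\geqslant0$. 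I first apply Proposition \ref{prop:main} at $\mathbf{x}_0$ with $t_0=\langle B\rangle_n(\mathbf{x}_0)-\langle A\rangle_n(\mathbf{x}_0)$, raising the value there up to $\langle B\rangle_n(\mathbf{x}_0)$ while preserving \textbf{(Q1)}, \textbf{(Q2)}, and then run the exhaustion procedure of Proposition \ref{main_proposition} on the remaining points to obtain a discrete copula $C_n$ which, by the squeeze $\langle B\rangle_n(\mathbf{x}_0)=A'(\mathbf{x}_0)\leqslant C_n(\mathbf{x}_0)\leqslant\langle B\rangle_n(\mathbf{x}_0)$, satisfies $C_n(\mathbf{x}_0)=B(\mathbf{x}_0)$. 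Extending $\breve{C}_n=(C_n)^{\mathrm{BL}}$ and invoking the compactness-and-refinement argument of Theorem \ref{main_theorem}, a subsequence converges uniformly to a copula $C$ with $A\leqslant C\leqslant B$; since $\mathbf{x}_0$ is a mesh point for all large $n$, $C(\mathbf{x}_0)=\lim_k\breve{C}_{n_k}(\mathbf{x}_0)=B(\mathbf{x}_0)$, as required.

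Finally, to deduce (b) I set $\tilde A=B^\sigma$ and $\tilde B=A^\sigma$, so that $\tilde A\leqslant\tilde B$ and $\mathcal{C}(\tilde A,\tilde B)=\{C^\sigma:C\in\mathcal{C}(A,B)\}$. Using $m_{\sigma R}(\sigma\mathbf{y})=-m_R(\mathbf{y})$ together with the identity $\sum_{\mathbf{y}}y_2\,m_R(\mathbf{y})=0$ (the $V$-volume of the coordinate function $y$ vanishes on every rectangle) one checks $L^{(\tilde A,\tilde B)}(\sigma R)=L^{(A,B)}(R)$; consequently $P_O^{(\tilde A,\tilde B)}(\sigma\mathbf{x})=P_M^{(A,B)}(\mathbf{x})$ and $\tilde B(\sigma\mathbf{x})-\tilde A(\sigma\mathbf{x})=B(\mathbf{x})-A(\mathbf{x})$. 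Applying part (a) to $(\tilde A,\tilde B)$ and reflecting the conclusion back (so that suprema turn into infima) gives exactly statement (b). I expect the sufficiency step to be the main obstacle: the delicate point is that the prescribed value $B(\mathbf{x}_0)$ must survive the passage from the discrete copulas to their uniform limit while the limit stays sandwiched between the \emph{original} continuous $A$ and $B$, which is precisely where uniform convergence of the bilinear interpolants and continuity of quasi-copulas are essential; a secondary care point is the reflection bookkeeping for $P_M$ versus $P_O$, since an index slip there would interchange (a) and (b).
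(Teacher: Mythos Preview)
Your proof is correct and follows essentially the same route as the paper's: the sufficiency direction of (a) runs the construction of Proposition~\ref{prop:main}/\ref{main_proposition} with $\mathbf{x}_0$ processed first so that the resulting discrete copula already hits $B(\mathbf{x}_0)$, then passes to the limit exactly as in Theorem~\ref{main_theorem}; the necessity direction is the same splitting of $L^{(A,B)}(R)$ into $V_C(R)$ plus a sum of nonnegative terms (the paper routes this through $P_O^{(A,B)}\geqslant P_O^{(A,C)}$ but the computation is identical); and (b) is obtained from (a) via the reflection $\sigma$, where your explicit verification that $L^{(B^\sigma,A^\sigma)}(\sigma R)=L^{(A,B)}(R)$ and $P_O^{(B^\sigma,A^\sigma)}(\sigma\mathbf{x})=P_M^{(A,B)}(\mathbf{x})$ spells out what the paper summarizes as ``\emph{mutatis mutandis}''.
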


\begin{proof}
  Let us start by the proof of \emph{(a)}. Recall that the condition of Theorem \ref{main_theorem} is fulfilled. Assume first that condition $B(\mathbf{x})-A(\mathbf{x}) \leqslant P_O^{(A,B)}(\mathbf{x})$ is satisfied at a certain point $\mathbf{x}\in\II^2$. Choose a mesh, say $\Delta_n$ containing this point, recall the notation  $\gamma^{(A,B)} (\mathbf{x})= \min\{P_O^{(A,B)}(\mathbf{x}), B(\mathbf{x})- A(\mathbf{x})\}$ of Proposition \ref{prop:main}, and observe that $\gamma^{(A,B)} (\mathbf{x})=B(\mathbf{x})- A(\mathbf{x})$ in our case. So, using this proposition we may replace $A$ by $A'$ such that $A'(\mathbf{x})= B(\textbf{x})$ and $A'=A$ at all other points of the mesh. As in (the main part of) the proof of Theorem \ref{main_theorem} we continue correcting the values of $A'$ at other points of the mesh until we find a discrete copula $C_n$ such that $A|_{\Delta_n}\leqslant C_n\leqslant B|_{\Delta_n}$ and at the same time $C_n(\mathbf{x})= B(\mathbf{x})$. Continue as in that proof by a sequence of meshes each contained in the next one whose union of corners is dense in $\II^2$ and by an according sequence of discrete copulas $C_n$ extended to a sequence of general copulas $(C_n)^{\mathrm{BL}}$. By going to a subsequence, if necessary, we may achieve a uniformly convergent sequence and a limit copula $C$ such that $A\leqslant C\leqslant B$ and at the same time, $C(\mathbf{x})= B(\mathbf{x})$, thus proving one direction of \emph{(a)}.

To get the proof of \emph{(a)} in the other direction assume that $B=\bigvee\mathcal{C}(A,B)$, choose $\mathbf{x} \in \II^2$, $\varepsilon>0$, and $C\in\mathcal{C}(A,B)$ such that
\[
    C(\mathbf{x})>B(\mathbf{x})-\varepsilon.
\]
It is clear that $P_O^{(A,B)}(\mathbf{x})\geqslant P_O^{(A,C)}(\mathbf{x})$. We want to show that
\begin{equation}\label{eq:last}
  P_O^{(A,C)}(\mathbf{x})\geqslant C(\mathbf{x})-A(\mathbf{x}).
\end{equation}
This will imply $P_O^{(A,B)}(\mathbf{x})>B(\mathbf{x})-A(\mathbf{x}) -\varepsilon$ and the desired conlusion will follow by the fact that $\varepsilon$ can be chosen arbitrarily small. In the proof of \eqref{eq:last} we first recall that
\[
    P_O^{(A,C)}(\mathbf{x})= \inf_{\substack{R\in \mathfrak{R}\\ m_R(\mathbf{x})<0}} \frac{L^{(A,C)}(R)}{-m_R(\mathbf{x})},
\]
where
\[
    L^{(A,C)}(R)=\sum_{m_R(\mathbf{y})>0}C(\mathbf{y})m_R(\mathbf{y}) +\sum_{m_R(\mathbf{y})<0}A(\mathbf{y})m_R(\mathbf{y}).
\]
We add to and subtract from these sums the sum of $C(\mathbf{y}) m_R(\mathbf{y})$ over $\mathbf{y}\in\II^2$ with $m_R(\mathbf{y})<0$ to get
\[
     L^{(A,C)}(R)= V_C(R) + \sum_{m_R(\mathbf{y})<0} (C(\mathbf{y}) - A(\mathbf{y})) (-m_R(\mathbf{y})) \geqslant (C(\mathbf{x}) - A(\mathbf{x})) (-m_R(\mathbf{x}))
\]
because all the summands of the above sum are nonnegative and they also contain the summand with $\mathbf{y}=\mathbf{x}$. This implies Equation \eqref{eq:last} thus finishing the proof of \emph{(a)}.

The proof of \emph{(b)} follows by taking the reflection on the case \emph{(a)} \emph{mutatis mutandis}, i.e.\ once the necessary changes have been made. In particular, applying, say, $\sigma:(x,y) \mapsto(1-x,y)$ on Equations \eqref{eq:LPmPo} and noting that a reflection is exchanging the order on the lattice of quasi-copulas and by Lemma \ref{reflection} also the main and opposite  role of the corners of rectangles we first get
\[
    L^{(B^\sigma,A^\sigma)}(\sigma(R))=L^{(A,B)}(R)
\]
and then
\[
    P_O^{(B^\sigma,A^\sigma)}(\sigma(\mathbf{x}))=P_M^{(A,B)}(\mathbf{x})  
    \quad\mbox{and}\quad P_M^{(B^\sigma,A^\sigma)}(\sigma(\mathbf{x}))= P_O^{(A,B)}(\mathbf{x}). 
\]
So, the reflected \emph{(a)} becomes
\[
    B^\sigma=\bigwedge\mathcal{C}(B^\sigma,A^\sigma)\ \ \mbox{\emph{if and only if}}\ \ A^\sigma(\sigma(\mathbf{x})) -B^\sigma(\sigma(\mathbf{x})) \leqslant P_M^{(B^\sigma,A^\sigma)}(\sigma(\mathbf{x}))\ \ \mbox{\emph{for all}}\ \  x\in\II^2
\]
which is exactly \emph{(b)}. So, we are done by the first part of the proof.
\end{proof}

\hfil \\

\begin{center}
  {\textsc{Conclusion}}
\end{center}

Example \ref{maincounterex} gives an imprecise copula according to the definition in \cite{MoMiPeVi} such that there is no copula contained in the order interval generated by it. So, as we pointed out in the abstract and explained further in the introduction, it is questionable whether the definition of an imprecise copula proposed there is in accordance with the intentions of the initiators. Of course, their Theorem 2(a) is still valid. Our approach through the functions $L, P_M,$ and $P_O$ might be helpful in improving their technique since Theorem \ref{mainforward} suggests a possible upgrade of their definition. However, the problem may be deeper since the authors of \cite{MoMiPeVi} never had the second half of the Sklar's theorem in the imprecise setting and developing a full scale imprecise theorem of Sklar's type would be a possible goal to attack. Actually, we believe that this is achievable with some more work, namely, one would also need to reconsider the definition of a coherent bivariate $p$-box as introduced in \cite{PeViMoMi2}. \\

\textbf{Acknowledgement.} The authors are thankful to Professors Susanne Saminger-Platz, Radko Mesiar, and Erich Peter Klement, for pointing this problem to us as well as for some interesting discussions on the problem.


\bibliographystyle{amsplain}

\end{document}